\newdimen\proofrulebreadth \proofrulebreadth=.05em
\newdimen\proofdotseparation \proofdotseparation=1.25ex
\newdimen\proofrulebaseline \proofrulebaseline=2ex
\let\then\relax
\def\hfi{\hskip0pt plus.0001fil}
\mathchardef\squigto="3A3B
\newif\ifinsideprooftree\insideprooftreefalse
\newif\ifonleftofproofrule\onleftofproofrulefalse
\newif\ifproofdots\proofdotsfalse
\newif\ifdoubleproof\doubleprooffalse
\let\wereinproofbit\relax
\newdimen\shortenproofleft
\newdimen\shortenproofright
\newdimen\proofbelowshift
\newbox\proofabove
\newbox\proofbelow
\newbox\proofrulename
\def\shiftproofbelow{\let\next\relax\afterassignment\setshiftproofbelow\dimen0 }
\def\shiftproofbelowneg{\def\next{\multiply\dimen0 by-1 }%
\afterassignment\setshiftproofbelow\dimen0 }
\def\setshiftproofbelow{\next\proofbelowshift=\dimen0 }
\def\setproofrulebreadth{\proofrulebreadth}
\def\prooftree{
%
\ifnum  \lastpenalty=1
\then   \unpenalty
\else   \onleftofproofrulefalse
\fi
%
\ifonleftofproofrule
\else   \ifinsideprooftree
        \then   \hskip.5em plus1fil
        \fi
\fi
%
\bgroup
\setbox\proofbelow=\hbox{}\setbox\proofrulename=\hbox{}%
\let\justifies\proofover\let\leadsto\proofoverdots\let\Justifies\proofoverdbl
\let\using\proofusing\let\[\prooftree
\ifinsideprooftree\let\]\endprooftree\fi
\proofdotsfalse\doubleprooffalse
\let\thickness\setproofrulebreadth
\let\shiftright\shiftproofbelow \let\shift\shiftproofbelow
\let\shiftleft\shiftproofbelowneg
\let\ifwasinsideprooftree\ifinsideprooftree
\insideprooftreetrue
%
\setbox\proofabove=\hbox\bgroup$\displaystyle 
\let\wereinproofbit\prooftree
%
\shortenproofleft=0pt \shortenproofright=0pt \proofbelowshift=0pt
%
\onleftofproofruletrue\penalty1
}
\def\eproofbit{
%
\ifx    \wereinproofbit\prooftree
\then   \ifcase \lastpenalty
        \then   \shortenproofright=0pt  
        \or     \unpenalty\hfil         
        \or     \unpenalty\unskip       
        \else   \shortenproofright=0pt  
        \fi
\fi
%
\global\dimen0=\shortenproofleft
\global\dimen1=\shortenproofright
\global\dimen2=\proofrulebreadth
\global\dimen3=\proofbelowshift
\global\dimen4=\proofdotseparation
\global\count255=\proofdotnumber
%
$\egroup  
%
\shortenproofleft=\dimen0
\shortenproofright=\dimen1
\proofrulebreadth=\dimen2
\proofbelowshift=\dimen3
\proofdotseparation=\dimen4
\proofdotnumber=\count255
}
\def\proofover{
\eproofbit 
\setbox\proofbelow=\hbox\bgroup 
\let\wereinproofbit\proofover
$\displaystyle
}%
\def\proofoverdbl{
\eproofbit 
\doubleprooftrue
\setbox\proofbelow=\hbox\bgroup 
\let\wereinproofbit\proofoverdbl
$\displaystyle
}%
\def\proofoverdots{
\eproofbit 
\proofdotstrue
\setbox\proofbelow=\hbox\bgroup 
\let\wereinproofbit\proofoverdots
$\displaystyle
}%
\def\proofusing{
\eproofbit 
\setbox\proofrulename=\hbox\bgroup 
\let\wereinproofbit\proofusing
\kern0.3em$
}
\def\endprooftree{
\eproofbit 
  \dimen5 =0pt
%
\dimen0=\wd\proofabove \advance\dimen0-\shortenproofleft
\advance\dimen0-\shortenproofright
%
\dimen1=.5\dimen0 \advance\dimen1-.5\wd\proofbelow
\dimen4=\dimen1
\advance\dimen1\proofbelowshift \advance\dimen4-\proofbelowshift
%
\ifdim  \dimen1<0pt
\then   \advance\shortenproofleft\dimen1
        \advance\dimen0-\dimen1
        \dimen1=0pt
        \ifdim  \shortenproofleft<0pt
        \then   \setbox\proofabove=\hbox{%
                        \kern-\shortenproofleft\unhbox\proofabove}%
                \shortenproofleft=0pt
        \fi
\fi
%
\ifdim  \dimen4<0pt
\then   \advance\shortenproofright\dimen4
        \advance\dimen0-\dimen4
        \dimen4=0pt
\fi
%
\ifdim  \shortenproofright<\wd\proofrulename
\then   \shortenproofright=\wd\proofrulename
\fi
%
\dimen2=\shortenproofleft \advance\dimen2 by\dimen1
\dimen3=\shortenproofright\advance\dimen3 by\dimen4
%
\ifproofdots
\then
        \dimen6=\shortenproofleft \advance\dimen6 .5\dimen0
        \setbox1=\vbox to\proofdotseparation{\vss\hbox{$\cdot$}\vss}%
        \setbox0=\hbox{%
                \advance\dimen6-.5\wd1
                \kern\dimen6
                $\vcenter to\proofdotnumber\proofdotseparation
                        {\leaders\box1\vfill}$%
                \unhbox\proofrulename}%
\else   \dimen6=\fontdimen22\the\textfont2 
        \dimen7=\dimen6
        \advance\dimen6by.5\proofrulebreadth
        \advance\dimen7by-.5\proofrulebreadth
        \setbox0=\hbox{%
                \kern\shortenproofleft
                \ifdoubleproof
                \then   \hbox to\dimen0{%
                        $\mathsurround0pt\mathord=\mkern-6mu%
                        \cleaders\hbox{$\mkern-2mu=\mkern-2mu$}\hfill
                        \mkern-6mu\mathord=$}%
                \else   \vrule height\dimen6 depth-\dimen7 width\dimen0
                \fi
                \unhbox\proofrulename}%
        \ht0=\dimen6 \dp0=-\dimen7
\fi
%
\let\doll\relax
\ifwasinsideprooftree
\then   \let\VBOX\vbox
\else   \ifmmode\else$\let\doll=$\fi
        \let\VBOX\vcenter
\fi
\VBOX   {\baselineskip\proofrulebaseline \lineskip.2ex
        \expandafter\lineskiplimit\ifproofdots0ex\else-0.6ex\fi
        \hbox   spread\dimen5   {\hfi\unhbox\proofabove\hfi}%
        \hbox{\box0}%
        \hbox   {\kern\dimen2 \box\proofbelow}}\doll%
%
\global\dimen2=\dimen2
\global\dimen3=\dimen3
\egroup 
\ifonleftofproofrule
\then   \shortenproofleft=\dimen2
\fi
\shortenproofright=\dimen3
%
\onleftofproofrulefalse
\ifinsideprooftree
\then   \hskip.5em plus 1fil \penalty2
\fi
}
\newcommand{\cors}[1]{\ensuremath{[#1]}}
\newcommand{\C}{\ensuremath{\mathbb{C}}}
\newcommand{\Chat}{\ensuremath{\widehat{\mathbb{C}}}}
\newcommand{\D}{\ensuremath{\mathcal{D}}}
\newcommand{\CC}{\ensuremath{\mathcal{E}}}
\newcommand{\Set}{\ensuremath{\mathsf{Set}}}
\newcommand{\Cat}{\ensuremath{\mathsf{Cat}}}
\newcommand{\op}{\ensuremath{^\mathrm{op}}}
\newcommand{\pbcorner}[1][dr]{\save*!/#1-1.2pc/#1:(-1,1)@^{|-}\restore}
\newcommand{\dom}{\ensuremath{\mathsf{dom}}}
\newcommand{\cod}{\ensuremath{\mathsf{cod}}}
\newcommand{\identity}{\ensuremath{\mathsf{id}}}
\newcommand{\ev}{\ensuremath{\mathrm{ev}}}
\newcommand{\y}{\ensuremath{\mathsf{y}}}
\newcommand{\hook}{\ensuremath{\hookrightarrow}}
\newcommand{\G}{\ensuremath{\Gamma}}
\newcommand{\type}{\mathsf{type}}       
\newcommand{\types}[2]{#1 \vdash #2:\type}
\newcommand{\Gtypes}[1]{\types{\Gamma}{#1}}
\newcommand{\term}[2]{#1\,:\,#2}
\newcommand{\terms}[2]{#1 \vdash #2}
\newcommand{\Gterms}[1]{\terms{\Gamma}{#1}}
\newcommand{\ext}[2]{{#1\!\centerdot\! #2}}
\newcommand{\ty}{\ensuremath{\,:\,}}
\newcommand{\pair}[1]{\ensuremath{\langle #1\rangle}}
\newcommand{\pairmap}{\ensuremath{\mathsf{pair}}}
\newcommand{\exdot}{\ensuremath{\!\centerdot\!}}
\newcommand{\btexdot}[2]{\ensuremath{{#1}\!\centerdot\!{#2}}}
\newcommand{\ttexdot}[3]{\ensuremath{{#1}\!\centerdot\!{#2}\!\centerdot\!{#3}}}
\newcommand{\qtexdot}[4]{\ensuremath{{#1}\!\centerdot\!{#2}\!\centerdot\!{#3}}\!\centerdot\!{#4}}
\newcommand{\Id}{\mathsf{Id}}
\newcommand{\id}[1]{\Id_{#1}}
\newcommand{\jay}{\mathsf{j}}
\newcommand{\iy}{\mathsf{i}}
\newcommand{\U}{\ensuremath{\mathcal{U}}}
\newcommand{\UU}{\ensuremath{\widetilde{\mathcal{U}}}}
\newtheorem{theorem}{Theorem}
\newtheorem*{theorem*}{Theorem}
\newtheorem{proposition}[theorem]{Proposition} 
\newtheorem{lemma}[theorem]{Lemma}
\newtheorem{corollary}[theorem]{Corollary} 
\theoremstyle{definition}
\newtheorem{definition}[theorem]{Definition}
\newtheorem{remark}[theorem]{Remark} 
\newtheorem*{remarks*}{Remarks}
\begin{document}

\title{Natural models of homotopy type theory\thanks{
Penultimate version; published as \cite{awodeyNatural}}}
\author{Steve Awodey}
\date{\today}

\maketitle

\begin{abstract}
\noindent The notion of a \emph{natural model} of type theory is defined in terms of that of a \emph{representable natural transfomation} of presheaves.  It is shown that such models agree exactly with the concept of a \emph{category with families} in the sense of Dybjer, which can be regarded as an algebraic formulation of type theory.  We determine conditions for such models to satisfy the inference rules for dependent sums $\Sigma$, dependent products $\Pi$, and intensional identity types $\Id$, as used in homotopy type theory.  It is then shown that a category admits such a model if it has a class of maps that behave like the abstract fibrations in axiomatic homotopy theory:\ they should be stable under pullback, closed under composition and relative products, and there should be weakly orthogonal factorizations into the class.  It follows that many familiar settings for homotopy theory also admit natural models of the basic system of homotopy type theory.
\end{abstract}


\noindent Homotopy type theory is an interpretation of constructive Martin-L\"of type theory \cite{ML}  into abstract homotopy theory.  It allows type theory to be used as a formal calculus for reasoning about homotopy theory, as well as more general mathematics such as can be formulated in category theory or set theory under this new interpretation.  Because constructive type theory has been implemented in computational proof assistants like \textsc{Coq}, homotopy type theory also facilitates the use of such computational tools in homotopy theory, category theory, set theory, and other fields of mathematics.  This is just one aspect of the \emph{Univalent Foundations Program}, which has recently been the object of quite intense investigation \cite{HoTTbook}.

One thing missing from homotopy type theory, however, has been a notion of \emph{model} that is both faithful to the precise formalism of type theory and yet general and flexible enough to be a practical tool for semantic investigations.  Past attempts have involved either highly structured categories corresponding closely to the syntax of type theory, such as the \emph{categories with families} of Dybjer \cite{CwF}, which are, however, somewhat impractical to work with semantically; or they use the more more natural and flexible setting of homotopical algebra, as in \cite{AW,GvdB}, but they must then be equipped (if possible) with structures satisfying unnatural coherence conditions, in order to model the type theory precisely.

Here we present a new approach which attemps to combine the advantages of these two strategies. It is based on the observation that a category with families is the same thing as a representable natural transformation in the sense of Grothendieck.  Ideas from Voevodsky \cite{KLV} and Lumsdaine-Warren \cite{LW} are also used in an essential way.  In the first section, the basic concept of a natural model is defined and shown to be adequate.  The second section determines conditions for when the basic type constructors $\Sigma, \Pi, \Id$ are also modelled.  This draws heavily on the methodology of \cite{KLV}.  Finally, the third section investigates the question of when a category admits such a model, concluding with the main result which provides a general, sufficient condition. This is closely related to the main result of \cite{LW}, which uses similar reasoning.

\section{Natural models}

The following concept is usually attributed to Grothendieck and is widely used in the theory of stacks (cf.~Def.\ 4.8.2~\cite[Tag 0023]{stacks-project}).
\begin{definition}\label{def:rep}
Let \C\ be a small category.  A natural transformation $$f : Y \to X$$ of presheaves on \C\ is called \emph{representable} if all of its fibers are representable objects, in the following sense: for every $C\in\C$ and $x\in X(C)$, there is a $D\in\C$, a $p : D\to C$, and a $y\in Y(D)$ such that the following square is a pullback,
\begin{equation}\label{diag:rep}
\xymatrix{
 \y{D} \ar[d]_{\y{p}} \ar[r]^-{y} \pbcorner &  Y\ar[d]^{f}\\
\y{C} \ar[r]_{x}   & X .}
\end{equation}
As here, we shall freely use the Yoneda lemma to identify elements $x\in X(C)$ with natural maps $x:\y{C} \to X$.
\end{definition}

Our first observation is that a representable natural transformation is the same thing as a \emph{category with families} in the sense of Dybjer \cite{CwF}.  
Indeed, let us write the objects of \C\ as $\Gamma, \Delta, \ldots$ and the arrows as $\sigma : \Delta \to \Gamma, \dots$, thinking of \C\ as a ``category of contexts".  Let $p : \UU\to\U$ be a representable map of presheaves, and write its elements as:
\begin{align*}
A\in \U(\Gamma)\ &\Leftrightarrow\ \Gtypes{A}\\
a\in \UU(\Gamma)\ &\Leftrightarrow\ \Gterms{a:A},
\end{align*}
where $A=p\circ a$, as indicated in:
\[
\xymatrix{
 	&  \UU \ar[d]^{p}\\
\y\Gamma \ar[ru]^{a}   \ar[r]_{A}   & \U .}
\]

Thus we regard \U\ as the \emph{presheaf of types}, with $\U(\G)$ the set of all types in context $\G$, and \UU\ as the \emph{presheaf of terms}, with $\UU(\G)$ the set of all terms in context $\G$, while the component $p_{\G} : \UU(\G) \to \U(\G)$ is the typing of the terms in context $\G$ (cf.~\cite{Hofmann2} for an early statement of this point of view).

Observe that naturality of $p : \UU\to\U$ means that for any ``substitution" $\sigma:\Delta\to\G$, we have an action on types and terms:
\begin{align*}
\Gtypes{A}\ &\Rightarrow\ \types{\Delta}{A\sigma}\\
\Gterms{a:A}\ &\Rightarrow\ \terms{\Delta}{a\sigma : A\sigma}\,.
\end{align*}
While, by functoriality, given any further $\tau: \Delta'\to\Delta$, we have
\[
(A\sigma)\tau = A(\sigma\circ\tau) \qquad (a\sigma)\tau = a(\sigma\circ\tau),
\]
as well as
\[
A1 = A \qquad a1 = a
\]
for the identity substitution $1 : \G\to\G$.

Finally, the representability of $p : \UU\to\U$ is exactly the operation of \emph{context extension}: given any $\Gtypes{A}$, by Yoneda we have the corresponding map $A : \G \to \U$, and we let $p_A: \ext{\G}{A} \to \G$ be the resulting fiber of $p$ as in \eqref{diag:rep}.  We therefore have a pullback square:
\begin{equation}\label{diag:conext}
\xymatrix{
\ext{\G}{A} \ar[d]_{p_A} \ar[r]^{q_A} \pbcorner &  \UU \ar[d]^{p}\\
\G \ar[r]_{A}   & \U ,}
\end{equation}
where the map $q_A : \ext{\G}{A}\to\UU$ determines a term 
$$\terms{\ext{\G}{A}}{q_A:Ap_A}.$$
In \eqref{diag:conext} and henceforth, we omit the $\y$ for the Yoneda embedding, letting the Greek letters serve to distinguish representable presheaves.
\medskip

The fact that \eqref{diag:conext} is a pullback means that given any $\sigma: \Delta\to\G$ and $\terms{\Delta}{a:A\sigma}$, there is a map $$(\sigma, a):\Delta\to\ext{\G}{A},$$ and this operation satisfies the equations
\begin{align*}
p_A\circ(\sigma,a) &= \sigma \\
q_A(\sigma,a) &= a,
\end{align*}
as indicated in the following diagram.
\begin{equation*}
\xymatrix{
\Delta  \ar@/{}_{1pc}/[ddr]_{\sigma} \ar@{.>}[dr]|-{(\sigma,a)} \ar@/{}^{1pc}/[drr]^{a} \\
& \ext{\G}{A} \ar[d]^{p_A} \ar[r]_{q_A} \pbcorner &  \UU \ar[d]^{p}\\
& \G \ar[r]_{A}   & \U }
\end{equation*}
Moreover, by the uniqueness of $(\sigma,a)$, for any $\tau: \Delta'\to\Delta$, we also have:
\begin{align*}
(\sigma,a)\circ\tau &= (\sigma\circ\tau,a\tau)\\
(p_A,q_A) &= 1.
\end{align*}

Comparing the foregoing with the definition of a category with families  in \cite{CwF}, we have shown:\footnote{
Since completing this paper, the author has learned that the following fact was also observed independently by M.~Fiore, see \cite{fiore}.}

\begin{proposition}
Let $p : \UU\to\U$ be a natural transformation of presheaves on a small category \C\ with a terminal object.  Then $p$ is representable in the sense of Definition \ref{def:rep} just in case $(\C, p)$ is a category with families.
\end{proposition}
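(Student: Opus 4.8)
The plan is to check that the two structures are built from exactly the same data; indeed the discussion preceding the statement has already assembled one direction, so the task is to organize it into an equivalence and verify the converse. I would factor the correspondence into two independent parts: the assignment of types and terms with their substitution action, and context comprehension.

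For the first part, recall that a category with families in Dybjer's sense carries a functor $T : \C\op \to \mathsf{Fam}$ into the category of families of sets, sending each context $\G$ to a set $\mathrm{Ty}(\G)$ of types together with, for each $A \in \mathrm{Ty}(\G)$, a set $\mathrm{Tm}(\G,A)$ of terms, all functorial in $\G$. The key observation is that $\mathsf{Fam}$ is equivalent to the arrow category $\Set^{\to}$, a family $(I,(X_i)_{i\in I})$ corresponding to the projection $\coprod_i X_i \to I$. Hence the datum of $T$ is the same as a functor $\C\op \to \Set^{\to}$, equivalently an arrow $p : \UU \to \U$ in $\psh{\C}$, under the dictionary $\U = \mathrm{Ty}$ and $\UU(\G) = \coprod_{A \in \mathrm{Ty}(\G)} \mathrm{Tm}(\G,A)$, with $p_\G$ the projection and $\mathrm{Tm}(\G,A)$ recovered as the fiber $p_\G^{-1}(A)$. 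This bijection is exactly the reading of $\U$ and $\UU$ as presheaves of types and terms set out above, and it uses nothing about representability.

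For the second part, I would show that, under this identification, context comprehension is precisely a witness to the representability of $p$. Given a representable $p$, for each $\G$ and each $A : \G \to \U$ we obtain the pullback square \eqref{diag:conext} with representable apex $\ext{\G}{A}$; reading off the projection $p_A$ and the term $q_A$ gives the comprehension data, and the universal property follows by testing the pullback against an arbitrary representable $\y\Delta$: a map $\Delta \to \ext{\G}{A}$ corresponds, via the Yoneda lemma, to a pair $(\sigma,a)$ with $\sigma : \Delta \to \G$ and $a \in \mathrm{Tm}(\Delta, A\sigma)$, satisfying $p_A\circ(\sigma,a)=\sigma$ and $q_A(\sigma,a)=a$, as computed above. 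Conversely, the comprehension data of a category with families makes \eqref{diag:conext} a pullback by the same translation, so every fiber of $p$ is representable.

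The step needing the most care is the passage between the limit condition ``\eqref{diag:conext} is a pullback in $\psh{\C}$'' and the element-wise universal property of comprehension. Here I would use that limits in a presheaf category are computed pointwise, so that a commuting square is a pullback iff the induced square of sets $\mathrm{Hom}(\y\Delta,-)$ is a pullback for every $\Delta$; since $\mathrm{Hom}(\y\Delta,F)\cong F(\Delta)$, testing against the representables is the same as checking the pullback pointwise, and on the representables it is literally the comprehension bijection. One minor point to flag is that Definition \ref{def:rep} states representability as mere existence of the fibers, whereas a category with families comes equipped with chosen comprehension; accordingly the equivalence identifies a category-with-families structure on $(\C,p)$ with a choice of pullbacks witnessing representability, the terminal object of $\C$ serving as the empty context.
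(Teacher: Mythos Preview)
Your proposal is correct and follows essentially the same route as the paper: the paper's ``proof'' is the discussion preceding the proposition, which unwinds representability into the context-extension data of a category with families and observes that the equations match. Your write-up is somewhat more structured than the paper's: you factor the correspondence through the equivalence $\mathsf{Fam}\simeq\Set^{\to}$ to handle the type/term data uniformly, you make both directions explicit (the paper leaves the converse implicit), and you isolate the step where the pullback in presheaves is tested against representables via Yoneda. You also flag the existence-versus-choice point for the representing pullbacks, which the paper glosses over here (though it returns to the algebraic character of the notion in the subsequent remark). None of this is a different argument, just a cleaner packaging of the same one.
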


The notion of a category with families is a variable-free way of presenting dependent type theory, including contexts and substitutions, types and terms in context, and context extension.  Accordingly, we may think of a representable map of presheaves on a category \C\ as a ``type theory over~\C" --- with  \C\ serving as the category of contexts and substitutions (the requirement that \C\ should have a terminal object, representing the ``empty context", is purely conventional).  As we shall see below, such a map of presheaves is essentially determined by a class of maps in \C\ that is closed under all pullbacks; these maps will be the types in context.

\begin{definition}
By a \emph{natural model of type theory} on a small category \C\ we mean a representable map of presheaves, 
\[
p : \UU \to \U.
\]
\end{definition}

\begin{corollary}
Natural models of type theory are evidently closed under composition, coproducts, and pullbacks along arbitrary maps $\U'\to\U$.
\end{corollary}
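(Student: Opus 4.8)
The plan is to begin by recasting Definition~\ref{def:rep} in the form most convenient for diagram-chasing. Combining it with the Yoneda lemma, a map $f : Y \to X$ in $\psh\C$ is representable exactly when, for every representable $\y C$ and every map $x : \y C \to X$, the pullback $\y C \times_X Y$ is again representable: the pullback square \eqref{diag:rep} says precisely that this fibre is some $\y D$. With this reformulation in hand, together with the standard pasting lemma for pullbacks and the fact that $\psh\C$ is a topos, all three closure properties follow.

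The composition and pullback cases I would dispatch together, as both are pure pasting arguments. For composition, given representable $p : Y \to X$ and $q : X \to W$ and any $x : \y C \to W$, I would first pull back $q$ along $x$ to obtain a representable fibre $\y D$ with its map $\y D \to X$, then pull back $p$ along $\y D \to X$ to obtain a representable fibre $\y E$; by the pasting lemma the outer rectangle exhibits $\y E$ as the fibre of $q\circ p$ over $x$, so $q \circ p$ is representable. For pullbacks, given representable $p : Y \to X$ and arbitrary $u : X' \to X$, write $p' : Y' \to X'$ for the pullback, so $Y' = X' \times_X Y$. For any $x' : \y C \to X'$, the composite $u \circ x'$ has representable fibre $\y D$ along $p$; pasting the defining square of $Y'$ with the square for $x'$ shows that the fibre of $p'$ over $x'$ agrees with the fibre of $p$ over $u \circ x'$, namely $\y D$, so $p'$ is representable.

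The coproduct case is where the real content lies, and I expect it to be the main obstacle. Let $p_i : Y_i \to X_i$ be a family of representable maps and consider $\coprod_i p_i : \coprod_i Y_i \to \coprod_i X_i$. Given $x : \y C \to \coprod_i X_i$, the crucial point is that such a map \emph{localizes to a single summand}: since colimits in $\psh\C$ are computed pointwise and coproducts in $\Set$ are disjoint, the element $x \in (\coprod_i X_i)(C) = \coprod_i X_i(C)$ lies in exactly one $X_j(C)$, whence $x$ factors as $\y C \xrightarrow{x_j} X_j \hookrightarrow \coprod_i X_i$. Because $\psh\C$ is a topos, its coproducts are disjoint and stable under pullback, and I would use this to verify that pulling back $\coprod_i Y_i \to \coprod_i X_i$ along the coprojection $X_j \hookrightarrow \coprod_i X_i$ returns exactly $p_j : Y_j \to X_j$. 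Pasting this with the factorization of $x$ then identifies the fibre of $\coprod_i p_i$ over $x$ with the fibre of $p_j$ over $x_j$, which is representable since $p_j$ is. The two delicate steps are thus the factorization through a single summand and the check, via disjointness and stability, that the coproduct transformation restricts to $p_j$ over that summand; once these are in place the argument closes by exactly the same pasting as in the previous cases.
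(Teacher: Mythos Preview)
Your argument is correct. The paper itself gives no proof of this corollary: the word ``evidently'' in the statement signals that it is left to the reader, and your reformulation of representability together with the pasting lemma and the indecomposability of representables (i.e.\ that any $\y C \to \coprod_i X_i$ factors through a unique summand, since colimits are pointwise and Yoneda gives $(\coprod_i X_i)(C)=\coprod_i X_i(C)$) is exactly the standard verification one would supply.
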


\subsection{Algebraic character}\label{subsection:algebraic}
As for many concepts, the notion of representability of a natural transformation $f : A \to B$ may be regarded in either of two ways: as a property of the map: for each $C\in\C$ and $x\in X(C)$, one can find a $D\in\C$, a $p : D\to C$, and a $y\in Y(D)$ such that \dots; or as a structure on the map: there is an explicitly given function that chooses the $(D\in\C, p : D\to C,y\in Y(D))$ for each $(C\in\C, x\in X(C))$ in such a way that \dots.  Since the values of the function are uniquely determined up to specific isomorphisms, and most of the required constructions on those values respect isomorphisms, it generally makes no difference which notion is assumed, and so the more flexible ``property'' version is often more convenient.  However, there are some steps below where the structured version is required, and for that reason we shall henceforth assume that a representable natural transformation includes a specific representability structure, consisting of a ``canonical'' pullback square of the form \eqref{diag:rep} for every $\y{C}\to X$ (no coherence).  We will not dwell on the choice of structure, however, and will recall this assumption where it is required.

The situation is entirely analogous to that of  a ``category with binary products''; and indeed, as in that case, one can understand the ``property versus structure'' difference as the condition that a cetain functor should have an adjoint versus the selection of a specific adjoint (among all isomorphic options).  The latter approach has the advantage of making explicit an algebraic structure that is perhaps not evident in the former. Indeed, as has recently been emphasized (e.g.\ in \cite{BCH}), the notion of a \emph{category with families} is essentially algebraic,  consisting of the four sorts: \emph{contexts, substitutions, types, terms}; operations defined on the sorts, including in particular \emph{context extension}; and equations between terms built from the operations.  The same is, of course, true of the equivalent concept of a \emph{natural model of type theory}, in the form of a representable natural transformation $f : A \to B$ over a category $\C$, as we now briefly indicate.

Regarded as a many-sorted algebraic theory, a natural model consists of four basic sorts $$C_0,\ C_1,\ A,\ B$$ along with the following operations and equations:
\begin{description}
\item[category:] the usual domain, codomain, identity and composition arrows for the index category:
\[
\xymatrix{
C_1 \times_{C_0} C_1 \ar[r]^-{\circ} & C_1 {\ar@<-2ex>[rr]_{\dom} \ar@<2ex>[rr]^{\cod}} && \ar[ll]|-{\,\identity\,} C_0
}
\]
together with the familiar equations for a category.

\item[presheaf:] the indexing and action operations for the presheaves: 
\[
\xymatrix{
C_1 \times_{C_0} A  \ar[r]^-{\alpha} & A \ar[d]^{p_A} \\
& C_0
}
\qquad
\xymatrix{
C_1 \times_{C_0} B \ar[r]^-{\beta} & B \ar[d]^{p_B} \\
& C_0
}
\]
together with the equations making $\alpha$ a (contravariant) action of $C_1$ on $A$:
\begin{align*}
 p_A(\alpha(u, a)) &= \dom(u),\\
 \alpha(u\circ v, a) &= \alpha(v,\alpha(u,a)),\\
 \alpha(1_{p_{A}(a)},a) &= a,
\end{align*}
and similarly for $\beta$.

\item[natural transformation:] an operation 
\[
f : A \to B
\]
satisfying the naturality equations: $$p_B \circ f = p_A,\qquad f\circ\alpha = \beta\circ(C_1\times_{C_0}f).$$

\item[representable:] note that a natural transformation $f : A \to B$ is representable just if the associated functor on the categories of elements,
\[
\textstyle{\int_\C{f} : \int_\C{A} \to \int_\C{B}}
\]
has a right adjoint $f^* : \int_\C{B} \to \int_\C{A}$ (cf.~\cite{ABSS}, \S\,8), which is an algebraic condition.  

In more detail, we requiring the following additional structure:

\begin{itemize}
\item an operation
\[
(f^*)_0 : B \to A
\]
taking the objects of $\int_\C{B}$ to  those of $\int_\C{A}$ (not necessarily preserving the indexing over $C_0$), 

\item an operation on the arrows in the categories of elements:
\[
(f^*)_1 : C_1 \times_{C_0} B \to C_1 \times_{C_0} A
\]
respecting domains and codomains,
\begin{align*}
\dom(\pi_1( (f^*)_1 (u, b) )) &= (f^*)_0(\dom(u)),\\
 p_A(\pi_2( (f^*)_1 (u, b) )) &= (f^*)_0(\cod(u)),
\end{align*}
and satisfying the functoriality equations, 
\begin{align*}
 (f^*)_1((u,b)\circ(u', b')) &=  (f^*)_1(u,b)\circ (f^*)_1(u', b'),\\
 (f^*)_1(1,b)&= (1,b).
\end{align*}

\item two further operations 
\begin{align*}
\eta &: A \to C_1 \times_{C_0} A \\
\epsilon &: B \to C_1 \times_{C_0} B 
\end{align*}
satisfying the standard equations for natural transformations of the form $\eta: 1 \to f^*\circ f$ and $\epsilon: f\circ f^*\to 1$.
\item the familiar triangle identities for an adjunction.
\end{itemize}
The remaining details are left to the reader.
\end{description}

\section{Modelling the type constructors}\label{sec:modelling}

When does a natural model of type theory also model the various type constructors, such as (dependent) product $\Pi$, sum $\Sigma$, and identity types $\Id$? As the notation $p : \UU\to\U$ suggests, the notion of a natural model is similar to Voevodsky's notion of a \emph{universe} \cite{KLV}, and we shall modify the approach taken there in order to determine conditions ensuring that the usual type-forming operations are modeled in our setting. (Related ideas were used already in \cite{Fu, StreicherUp}.)

We require the following preliminary observations regarding polynomial functors, for more on which see \cite{GK}.  

Given a map $f : B \to A$  in a locally cartesian closed category $\CC$, there is an associated \emph{polynomial endofunctor} $P_f : \CC \to \CC$,
defined for every object $X\in\CC$ by
\begin{equation}\label{eq:polydef}
P_f(X)\ =\ \sum_{a:A}X^{B_a}
\end{equation}
where, as usual, we write $B_a = f^{-1}(a)$ for the fiber of $f$ at $a:A$, using the internal language of $\CC$ as explained in \cite{GK}.  Formally, this functor is defined from the LCCC structure on $\CC$ as a composite:
\[
P_f(X)\ =\ \sum_{A}\prod_{f}B^{*}(X)
\]
where:
\begin{align*}
B^* &: \CC \to \CC/B && \text{is pullback along $B\to 1$},\\
\prod_{f} &: \CC/B \to \CC/A && \text{is right adjoint to pullback along $f : B\to A$},\\
\sum_{A} &: \CC/A \to \CC && \text{is composition along $A\to 1$}.
\end{align*}

The following lemma can be found, in essentially the same form, in \cite{DT,PTJ:ppbdht}.
 
\begin{lemma}\label{lem:polyclass}
There is a natural bijection between maps $g : Y \to \sum_{a:A}X^{B_a}$ and pairs of maps $\big(g_1:Y \to A,\ g_2 : Y\times_A B \to X\big)$ as indicated in the following diagram.
\begin{equation}\label{diag:polyclass}
\xymatrix{
X & Y\times_A B \ar[l]_-{g_2}  \ar[d]\ar[r]\pbcorner &  B \ar[d]^{f}\\
&Y\ar[r]_{g_1}   & A}
\end{equation}
\end{lemma}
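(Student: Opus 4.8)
The plan is to obtain the bijection by chasing the three adjunctions that make up the defining factorization $P_f(X) = \sum_A \prod_f B^* X$, reading off $g_1$ and $g_2$ one adjunction at a time. Write $f^* : \CC/A \to \CC/B$ for pullback along $f$, so that $f^* \dashv \prod_f$, and $\sum_B : \CC/B \to \CC$ for the forgetful (domain) functor, so that $\sum_B \dashv B^*$; recall also that $B^* X$ is the product projection $X \times B \to B$.

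First I would strip off the outer $\sum_A$. Since $\sum_A : \CC/A \to \CC$ is the domain functor, a map $g : Y \to \sum_A \prod_f B^* X$ in $\CC$ amounts to a choice of $g_1 := w \circ g : Y \to A$, where $w$ is the structure map of $\prod_f B^* X$ over $A$, together with $g$ regarded as a morphism $(Y,g_1) \to \prod_f B^* X$ in $\CC/A$; that is,
$$\CC\big(Y,\ \textstyle\sum_A \prod_f B^* X\big)\ \cong\ \coprod_{g_1 : Y \to A}(\CC/A)\big((Y,g_1),\ \textstyle\prod_f B^* X\big).$$
For fixed $g_1$, the adjunction $f^* \dashv \prod_f$ gives
$$(\CC/A)\big((Y,g_1),\ \textstyle\prod_f B^* X\big)\ \cong\ (\CC/B)\big(f^*(Y,g_1),\ B^* X\big),$$
and here $f^*(Y,g_1)$ is the pullback of $g_1$ along $f$, i.e.\ exactly the projection $Y \times_A B \to B$ of \eqref{diag:polyclass}. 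Finally, $\sum_B \dashv B^*$ identifies maps over $B$ into $B^* X = X \times B$ with maps in $\CC$,
$$(\CC/B)\big(Y\times_A B \to B,\ B^* X\big)\ \cong\ \CC(Y \times_A B,\ X),$$
and the image of $g$ under the composite is precisely the desired $g_2 : Y \times_A B \to X$.

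Composing the three displayed bijections identifies $g$ with the pair $(g_1, g_2)$, as claimed. Naturality is then automatic, since each step is a natural isomorphism of hom-functors furnished by an adjunction (the first step being just the universal property of the domain functor $\sum_A$), so no separate verification is needed. I expect no substantive obstacle here; the only point requiring care is the bookkeeping in the middle step --- confirming that $f^*(Y,g_1)$ is the pullback $Y \times_A B \to B$ and that $B^* X$ is the product projection $X \times B \to B$ --- which is exactly the data displayed in \eqref{diag:polyclass}.
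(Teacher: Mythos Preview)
Your proof is correct and follows essentially the same approach as the paper: both unwind the defining composite $P_f = \sum_A \prod_f B^*$ one layer at a time via the associated adjunctions. The only cosmetic difference is that the paper packages your second and third steps together---writing $\prod_f B^*X = (A^*X)^f$ and taking an exponential transpose in $\CC/A$ landing in $A^*X$, then projecting to $X$---whereas you keep the $f^*\dashv\prod_f$ and $\sum_B\dashv B^*$ steps separate; these are two ways of saying the same thing.
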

\begin{proof}
Given
\[
\xymatrix{
g: Y \ar[r] &  \sum_{a:A}X^{B_a} = \sum_{A}\prod_{f}B^{*}(X)
}
\]
compose with the projection $\pi_1 : \sum_{A}\prod_{f}B^{*}(X)\to A$ to get $g_1 = \pi_1\circ g : Y\to A$, making $g$ a map over $A$,
\[
\xymatrix{
Y \ar[r]^-{g}  \ar[rd]_{g_1}  &  \sum_{A}\prod_{f}B^{*}(X) \ar[d]^{\pi_1}  \\
& A.
}
\]
As an object over $A$, the map $\pi_1$ is
\[
\prod_{f}B^*(X) = \prod_{f}f^*A^*(X) = (A^*(X))^f.
\]
We can therefore take the exponential transpose of $g$ to get another map over $A$ of the form:
\[
\xymatrix{
Y\times_A B \ar[r]^{\tilde{g}}  \ar[rd]_{g_1\times f}  &  A^*(X) \ar[d]  \\
& A.
}
\]
Composing $\tilde{g}$ with the second projection $A^*(X) = A\times X \to X$ gives $g_2 :Y\times_A B \to X$ as indicated in
\begin{equation}\label{diag:compwithproject}
\xymatrix{
Y\times_A B \ar[r]^{\tilde{g}}  \ar[rd]_{g_1\times f} \ar@/{}^{20pt}/[rr]^{g_2} &  A \times X \ar[d] \ar[r] & X \\
& A .
}
\end{equation}
This assignment of $(g_1, g_2)$ to $g$ is clearly reversible and natural in $Y$.
\end{proof}

Since the isomorphism of lemma \ref{lem:polyclass} is natural in $Y$, it is convenient to consider the generic case, where $Y = \sum_{a:A}X^{B_a}$ and $g$ is the identity.  In that case, we have a diagram of the form,
\[
\xymatrix{
X & \\
G \ar[u]_{u_2} \ar[d] \ar[r] \pbcorner &  B \ar[d]^{f}\\
\sum_{a:A}X^{B_a} \ar[r]_-{u_1}   & A
}
\]
where $u_1$ is the canonical projection $\pi_1 : \sum_{a:A}X^{B_a} \to A$, and the ``generic pulled-back object" $G$ can be described over $A$ as
$X^{B_a} \times_A B_a$.
The map $u_2 : G \to X$ is then evaluation over $A$, composed with the second projection as in \eqref{diag:compwithproject}.

Now given any $g : Y \to \sum_{a:A}X^{B_a}$, the associated maps $g_1 : Y \to A$ and $g_2 : Y\times_A B \to X$ are given by pullback and composition, as indicated in the following diagram.
\begin{equation}\label{diag:polyclassnat}
\xymatrix{
 & X & \\
 Y\times_A B \ar[r] \ar[ur]^{g_2} \ar[d] \pbcorner & G \ar[u]_{u_2} \ar[d] \ar[r] \pbcorner &  B \ar[d]^{f}\\
Y \ar[r]^-{g} \ar@/{}_{20pt}/[rr]_{g_1} & \sum_{a:A}X^{B_a} \ar[r]^-{u_1}   & A
}
\end{equation}

Now consider the case of the polynomial functor $P = P_p$ of a natural model $p : \UU \to \U$, with the form
\begin{equation}\label{eq:polyuniv}
P(X)\ =\ \sum_{\term{A}{\U}}X^{A},
\end{equation}
where we write simply $A = \UU_A$ for the fiber of $p$ over $A:\U$.  This is justified by considering the pullback
\begin{equation}\label{diag:conext2}
\xymatrix{
\ext{1}{A} \ar[d] \ar[r] \pbcorner &  \UU \ar[d]^{p}\\
1 \ar[r]_{A}   & \U ,}
\end{equation}
as the case of \eqref{diag:conext} where $\G =1$ is terminal, and therefore $\ext{1}{A} = A$ is just an object of \C, i.e.\ a ``closed type".

Applying Lemma \ref{lem:polyclass} to \eqref{eq:polyuniv} in the case $f = p$ and $X=\U$ and $Y = \G$ representable, we obtain a natural, bijective correspondence:
\begin{equation}\label{class:cont}
\begin{prooftree}
\[
	(A,B): \G \to \sum_{\term{A}{\U}}\U^{A}
\Justifies
\thickness=1em
	\xymatrix{
	\U & \ext{\G}{A} \ar[l]_{B}  \ar[d]\ar[r]\pbcorner &  \UU \ar[d]^{p}\\
		&\G \ar[r]_{A}   & \U
		}
		\]
\Justifies
\thickness=1em
	\Gtypes{A},\quad \types{\ext{\G}{A}}{B}
\end{prooftree}
\end{equation}
Thus just as $\U$ \emph{classifies types in context} $\Gtypes{A}$, we can say that $P(\U)\ =\ \sum_{\term{A}{\U}}\U^{A}$ \emph{classifies types in an extended context} $\types{\ext{\G}{A}}{B}$.  For the record:

\begin{proposition}\label{prop:polyclasstypes}
The presheaf $P(\U)\ =\ \sum_{\term{A}{\U}}\U^{A}$ classifies types in context $\types{A}{B}$, in the sense that there is a natural isomorphism between maps $\G \to \sum_{\term{A}{\U}}\U^{A}$ and pairs $\types{\G}{A}$ and $\types{\ext{\G}{A}}{B}$, as displayed in the following diagram.
\begin{equation}\label{diag:polyclasstypes}
\xymatrix{
 & \U & \\
 \ext{\G}{A} \ar[r] \ar[ur]^{B} \ar[d] \pbcorner & \cdot \ar[u] \ar[d] \ar[r] \pbcorner &  \UU \ar[d]^{p}\\
\G \ar[r] \ar@/{}_{20pt}/[rr]_{A} & \sum_{\term{A}{\U}}\U^{A} \ar[r]   & \U
}
\end{equation}
\end{proposition}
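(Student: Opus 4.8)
The plan is to obtain this as a direct instance of Lemma~\ref{lem:polyclass}. I would specialize that lemma to the map $f = p : \UU \to \U$, taking $X = \U$ and $Y = \G$ a representable presheaf. The lemma then supplies a natural bijection between maps $g : \G \to \sum_{\term{A}{\U}}\U^{A} = P(\U)$ and pairs $\big(g_1 : \G \to \U,\ g_2 : \G\times_\U\UU \to \U\big)$, arranged as the pullback datum of diagram~\eqref{diag:polyclass}. So essentially all of the combinatorial content is already proved; the remaining task is to read the two components of the pair in the type-theoretic language established earlier in the section.

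For the first component, the Yoneda dictionary identifies a map $g_1 : \G \to \U$ with a type $\Gtypes{A}$, setting $A = g_1$. For the second, I would observe that the pullback $\G\times_\U\UU$ occurring in the lemma is formed along $g_1 = A : \G \to \U$ and along $p : \UU \to \U$; but this is exactly the defining pullback of the context extension $\ext{\G}{A}$ in~\eqref{diag:conext}, which exists precisely because $p$ is representable. Hence there is a canonical isomorphism $\G\times_\U\UU \cong \ext{\G}{A}$ over $\G$, and under it the map $g_2$ becomes a map $\ext{\G}{A} \to \U$, i.e.\ a type $\types{\ext{\G}{A}}{B}$ in the extended context, with $B = g_2$. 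Substituting these two identifications into the instance of~\eqref{diag:polyclassnat} for $f = p$, $X = \U$, $Y = \G$ yields diagram~\eqref{diag:polyclasstypes} on the nose, with the middle object $\cdot$ being the generic pulled-back object $G$ pulled back along $g$.

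Naturality in $\G$ is inherited directly from the naturality clause of Lemma~\ref{lem:polyclass}, together with the fact that context extension $\ext{(-)}{A}$ is itself stable under substitution, so no fresh verification is required. I do not anticipate a genuine obstacle here: the proposition is in effect a corollary of the polynomial-classification lemma, and the only step that calls for any care --- matching the abstract fiber product $\G\times_\U\UU$ with the context extension $\ext{\G}{A}$ --- is immediate once one recalls that $\ext{\G}{A}$ was \emph{defined} to be the representing fiber of $p$ over $A : \G \to \U$.
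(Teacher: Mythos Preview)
Your proposal is correct and takes exactly the same approach as the paper: the paper's proof is the single sentence ``This is just diagram \eqref{diag:polyclassnat} specialized to the present case,'' and you have simply unpacked that specialization (with $f=p$, $X=\U$, $Y=\G$) in more detail than the paper bothers to.
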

\proof{
This is just diagram \eqref{diag:polyclassnat} specialized to the present case.
}

\begin{remark}
Throughout this section, we are using the same methodology as in \cite{KLV}(section 1.4), pioneered by Hofmann in \cite{Hofmann}(section 4), to reduce certain general type constructions to generic cases on a universe (cf.~also Fu \cite{Fu}).
\end{remark}
\subsection{Products}

\begin{proposition}\label{prop:prod} 
Let $P(X) =\sum_{\term{A}{\U}}X^{A}$ be the polynomial functor associated to a natural model $p : \UU \to \U$.
Then the type-theoretic rules for (extensional) dependent products are modelled by maps of the form
\begin{align}
\lambda :&\ P(\UU)\to \UU \label{prop:prod1}\\
\Pi :&\ P(\U) \to \U \label{prop:prod2}
\end{align}
making the following diagram a pullback. 
\begin{equation}\label{diag:prod}
\xymatrix{
P(\UU)  \ar[d]_{P(p)} \ar[r]^-{\lambda} &  \UU \ar[d]^{p}\\
P(\U) \ar[r]_-{\Pi} & \U }
\end{equation}
\end{proposition}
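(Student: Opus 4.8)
The plan is to read off both directions of the biconditional through the classification results already established, so that the statement reduces to a dictionary between, on one side, the three structure maps together with the pullback condition, and on the other, the formation, introduction and elimination rules together with the $\beta$- and $\eta$-equations. First I would record the analogue of Proposition~\ref{prop:polyclasstypes} for \emph{terms}: applying Lemma~\ref{lem:polyclass} to $p : \UU \to \U$ with $X = \UU$ and $Y = \G$ representable yields a natural bijection between maps $\G \to P(\UU) = \sum_{\term{A}{\U}}\UU^{A}$ and pairs consisting of a type $\Gtypes{A}$ together with a term $\terms{\ext{\G}{A}}{\term{b}{B}}$ in the extended context, where $B = p \circ b$. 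Under these identifications the map $P(p) : P(\UU) \to P(\U)$ sends such a pair $(A,b)$ to $(A,B)$, i.e.\ it retains the type and forgets the chosen term.

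With this dictionary in hand, the two horizontal maps acquire their intended meaning. The map $\Pi : P(\U) \to \U$ takes a pair of types $\big(\Gtypes{A},\ \types{\ext{\G}{A}}{B}\big)$ to a single type $\Gtypes{\Pi(A,B)}$, which is exactly the \emph{formation rule}; the map $\lambda : P(\UU) \to \UU$ takes a type $A$ together with a term $\terms{\ext{\G}{A}}{\term{b}{B}}$ to a term $\lambda(b)$, which is the \emph{introduction rule}; and the commutativity $p \circ \lambda = \Pi \circ P(p)$ asserts precisely that the type of $\lambda(b)$ is $\Pi(A,B)$. Thus the mere existence of $\lambda$ and $\Pi$ making the square commute is equivalent to the formation and introduction rules, together with their stability under substitution, which is built into the naturality of maps of presheaves.

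It remains to match the \emph{pullback} condition to the remaining content of the rules. By the universal property of the pullback, the square~\eqref{diag:prod} is a pullback exactly when, for every representable $\G$, the comparison map
$$P(\UU)(\G) \longrightarrow \UU(\G) \times_{\U(\G)} P(\U)(\G)$$
is a bijection, naturally in $\G$. Unwinding both sides through the classifications above, the codomain classifies a type $A$, a type $B$ over $\ext{\G}{A}$, and a term $\Gterms{f : \Pi(A,B)}$, while the domain classifies the same $A,B$ together with a term $\terms{\ext{\G}{A}}{\term{b}{B}}$, and the comparison map is $b \mapsto \lambda(b)$ fibrewise over $(A,B)$. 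Hence the square is a pullback precisely when, naturally in $\G$ and for each $(A,B)$, the operation $\lambda$ is a bijection between terms $\terms{\ext{\G}{A}}{\term{b}{B}}$ and terms $\Gterms{f:\Pi(A,B)}$. Its inverse is application to the generic variable $q_A$, sending $f$ to $\terms{\ext{\G}{A}}{\mathsf{app}(f p_A,\, q_A) : B}$ (writing $\mathsf{app}$ for application); that this is a two-sided inverse is exactly the pair of equations $\mathsf{app}(\lambda(b) p_A,\, q_A) = b$ and $\lambda(\mathsf{app}(f p_A,\, q_A)) = f$, namely the $\beta$- and $\eta$-rules. General application $\mathsf{app}(f,a)$ for an arbitrary $\Gterms{a:A}$ is then recovered from this generic case by substituting along $(1,a) : \G \to \ext{\G}{A}$.

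The main obstacle I anticipate is precisely this last passage from the \emph{generic} instance to the \emph{fully general} rules. The pullback property delivers, for each context, only the bijection at the universal element --- application of $f$ to the generic variable $q_A$ --- whereas the elimination rule permits application to an arbitrary $a$, and $\beta,\eta$ are asserted in every context and under every substitution. What makes this work is the \emph{naturality} of the bijection in $\G$: it is naturality that transports the generic application along the substitution $(1,a)$ and thereby yields both the general elimination rule and the substitution-stability (the Beck--Chevalley type coherence) of all the operations at no extra cost. Verifying carefully that naturality of the pullback's universal property corresponds exactly to this substitution-stability, and that the two round-trip identities of the inverse bijection are matched to $\beta$ and $\eta$ without omission or duplication, is the one step that requires genuine care; the remainder is the bookkeeping of the dictionary, which runs equally in both directions and so establishes the ``just in case''.
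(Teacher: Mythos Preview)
Your approach is essentially the same as the paper's: both unfold $P(\U)$ and $P(\UU)$ via Lemma~\ref{lem:polyclass} and Proposition~\ref{prop:polyclasstypes}, read $\Pi$ and $\lambda$ as formation and introduction, and then interpret the pullback condition as saying that $\lambda$ is a natural bijection between terms $\terms{\ext{\G}{A}}{b:B}$ and terms $\Gterms{f:\Pi(A,B)}$, with the inverse $f\mapsto\tilde f$ giving elimination by $f(a)=\tilde f\circ(1,a)$ and $\beta$ following from $\widetilde{\lambda b}=b$.

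The one point on which you are more explicit than the paper is the $\eta$-rule. The paper lists only $\Pi$-form, $\Pi$-intro, $\Pi$-elim and $\Pi$-comp ($\beta$), proves these from the pullback, and then dismisses the converse as ``equally straightforward and omitted''. You correctly observe that the \emph{surjectivity} half of the bijection --- i.e.\ $\lambda(\tilde f)=f$ --- is precisely $\eta$, and that without it one cannot recover the uniqueness clause of the pullback from the stated rules alone. So your account of the biconditional is actually tighter: the pullback condition packages $\beta$ \emph{and} $\eta$ together, and your identification of the two round-trip equations with these two rules is the right bookkeeping. Otherwise the argument, including the passage from the generic application at $q_A$ to arbitrary $a$ via $(1,a)$ and the use of naturality for substitution-stability, matches the paper's Remark~\ref{remark:substitution}.
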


\begin{proof}
Replacing $P$ by its definition, we obtain a diagram of the form:
\begin{equation}\label{diag:prod2}
\xymatrix{
\sum_{A\ty\U}\UU^{A}  \ar[d]   \ar[rr]^-{\lambda} &&  \UU \ar[d]^{p}\\
\sum_{A\ty\U}\U^{A} 		    \ar[rr]_-{\Pi} && \U \,.}
\end{equation}
Using proposition \ref{prop:polyclasstypes}, which states that $\sum_{A\ty\U}\U^{A}$ classifies pairs $\Gtypes{A} $ and $\types{\ext{\G}{A}}{B}$, the operation $\Pi: \sum_{\term{A}{\U}}\U^{A}  \to \U$ is seen to be the type-theoretic \emph{formation rule},
\[\tag{\text{$\Pi$-form}}
\begin{prooftree}
\Gtypes{A} \qquad \types{\ext{\G}{A}}{B}
\justifies
\Gtypes{\prod_{A}{B}}.
 \end{prooftree}
\]
Now just as $P(\U)\ =\ \sum_{A:\U}\U^{A}$ classifies pairs of the form $$\Gtypes{A}, \quad \types{\ext{\G}{A}}{B},$$ so $P(\UU)\ =\ \sum_{A\ty\U}\UU^{A}$ classifies pairs of the form $$\Gtypes{A}, \quad \terms{\ext{\G}{A}}{b:B}.$$ This follows from lemma \ref{lem:polyclass} just as did proposition \ref{prop:polyclasstypes}, but replacing the presheaf of types $\U$ by the presheaf of terms $\UU$.

Thus the operation $\lambda: \sum_{A\ty\U}\UU^{A}  \to \UU$ models the type-theoretic \emph{introduction rule},
\[\tag{\text{$\Pi$-intro}}
\begin{prooftree}
\Gtypes{A} \quad \terms{\ext{\G}{A}}{b:B}
\justifies
\Gterms{\lambda_A b:\prod_{A}B}.
 \end{prooftree}
\]

Consider the \emph{elimination rule}:
\[\tag{\text{$\Pi$-elim}}
\begin{prooftree}
\Gterms{f:\prod_{A}B} \qquad \Gterms{a:A}
\justifies
\Gterms{f(a):B[a]}
 \end{prooftree}
\]
and the associated \emph{computation rule} ($\beta$):
\[\tag{\text{$\Pi$-comp $\beta$}}
\begin{prooftree}
\terms{\ext{\G}{A}}{b:B} \qquad \Gterms{a:A}
\justifies
\Gterms{(\lambda_A b)(a) = b[a] : B[a]\,.}
 \end{prooftree}
\]
The notation $B[a]$ and $b[a]$ is interpreted as follows: given $\Gterms{a:A}$, we have
\[
\xymatrix{
 	&  \UU \ar[d]^{p}\\
\Gamma \ar[ru]^{a}   \ar[r]_{A}   & \U\,,}
\]
and so by taking a pullback, we get a substitution $(1,a) : \G\to \ext{\G}{A}$ into the context extension $\ext{\G}{A}$:
\begin{equation*}
\xymatrix{
\G  \ar@/{}_{1pc}/[ddr]_{1} \ar@{.>}[dr]|-{(1,a)} \ar@/{}^{1pc}/[drr]^{a} \\
& \ext{\G}{A} \ar[d] \ar[r] \pbcorner &  \UU \ar[d]^{p}\\
& \G \ar[r]_{A}   & \U \,.}
\end{equation*}
Now $\types{\ext{\G}{A}}{B}$ and $\terms{\ext{\G}{A}}{b:B}$ are of the form
\[
\xymatrix{
 	&  \UU \ar[d]^{p}\\
\ext{\G}{A} \ar[ru]^{b}   \ar[r]_{B}   & \U,}
\]
so we can set
\begin{align*}
B[a] &= B\circ (1, a) \\
b[a] &= b\circ (1, a),
\end{align*}
as indicated in
\[
\xymatrix{
 	&&  \UU \ar[d]^{p}\\
\G \ar@/{}^{20pt}/[rru]^{b[a]} \ar[r]^-{(1, a)} \ar@/{}_{20pt}/[rr]_{B[a]} & \ext{\G}{A} \ar[ru]^{b}   \ar[r]^{B}   & \U,}
\]
to get the terms
\begin{align*}
\Gtypes{B[a]} \\
\Gterms{b[a] : B[a]}.
\end{align*}

Now suppose that \eqref{diag:prod2} is a pullback.  We require a term $\Gterms{f(a):B[a]}$, assuming we have the premises $\Gterms{f:\prod_{A}B}$ and $\Gterms{a:A}$.  The first premise means there are maps $(A, B)$ and $f$ as indicated in
\begin{equation}\label{diag:pielim1}
\xymatrix{
\G  \ar@/{}_{1pc}/[ddr]_{(A,B)} \ar@{.>}[dr]|-{(A,\tilde{f})} \ar@/{}^{1pc}/[drr]^{f} \\
& \sum_{A\ty\U}\UU^{A}  \ar[d]^{P(p)} \ar[r]_-{\lambda} \pbcorner &  \UU \ar[d]^{p}\\
&\sum_{A\ty\U}\U^{A}  \ar[r]_-{\Pi}   & \U .}
\end{equation}
Since the square is a pullback, there is a map $(A,\tilde{f})$ as indicated, and by the classifying property of $\sum_{A\ty\U}\UU^{A}$, it corresponds  uniquely to a term $\terms{\ext{\G}{A}}{\tilde{f}:B}$,
\[
\xymatrix{
 	&  \UU \ar[d]^{p}\\
\ext{\G}{A} \ar[ru]^{\tilde{f}}   \ar[r]_{B}   & \U.}
\]
Now set:
\[
f(a) = \tilde{f}[a] = \tilde{f}\circ (1, a),
\]
so that indeed $\terms{\G}{f(a) : B[a]}$, as required.

For the computation rule ($\beta$), suppose $\terms{\ext{\G}{A}}{b:B}$.  Then $\Gterms{\lambda_A b: \prod_{A}B}$ and diagram \eqref{diag:pielim1} becomes
\begin{equation}\label{diag:pielim2}
\xymatrix{
\G  \ar@/{}_{1pc}/[ddr]_{(A,B)} \ar@{.>}[dr]|-{(A,\widetilde{\lambda_A b})} \ar@/{}^{1pc}/[drr]^{\lambda_A b} \\
& \sum_{A\ty\U}\UU^{A}  \ar[d]^{P(p)} \ar[r]_-{\lambda} \pbcorner &  \UU \ar[d]^{p}\\
&\sum_{A\ty\U}\U^{A}  \ar[r]_-{\Pi}   & \U \,,}
\end{equation}
for some $\terms{\ext{\G}{A}}{\widetilde{\lambda_A b}:B}$,
\[
\xymatrix{
 	&  \UU \ar[d]^{p}\\
\ext{\G}{A} \ar[ru]^{\widetilde{\lambda_A b}}   \ar[r]_{B}   & \U.}
\]
But clearly $\terms{\ext{\G}{A}}{b:B}$ also satisfies the condition $\lambda\circ(A,b)=\lambda_Ab$, so by the universal property of the pullback, we have
\[
\terms{\ext{\G}{A}}{\widetilde{\lambda_A b} = b:B}\,.
\]
But then
\[
\Gterms{(\lambda_A b)(a) = (\widetilde{\lambda_A b})[a] = b[a] : B[a]}\,,
\]
as required.

An additional computation rule is required for \emph{extensional} $\Pi$-types, and it is also satified, namely the so-called $\eta$-rule.  This rule is written with variables in the form ${{\lambda}x:A}.f(x) = f\ty \prod_{A}B$.  In the variable-free style of categories with families, this takes the form (cf.~\cite{CwF}, 2.2):

\[\tag{\text{$\Pi$-comp $\eta$}}
\begin{prooftree}
\Gterms{f:\prod_{A}B}
\justifies
\Gterms{\lambda_A (fp_A(q_A)) = f : \prod_{A}B}\,,
 \end{prooftree}
\]
where, recall from \eqref{diag:conext}, that for any $\Gtypes{A}$ the terms $p_A$ and $q_A$ are defined by
\begin{equation*}
\xymatrix{
\ext{\G}{A} \ar[d]_{p_A} \ar[r]^{q_A} \pbcorner &  \UU \ar[d]^{p}\\
\G \ar[r]_{A}   & \U\,.
}
\end{equation*}
We have:
\[
(fp_A)(q_A) = \widetilde{(fp_A)}[q_A] = \widetilde{(fp_A)}(1, q_A) = \widetilde{f}p_A(1, q_A) = \widetilde{f}.
\]
Therefore 
\[
\lambda_A (fp_A(q_A)) = \lambda\circ(A, fp_A(q_A)) = \lambda\circ(A, \widetilde{f}) = f.
\]
The straightforward verification of the converse implication is omitted.
\end{proof}

Examining the proof (along with some further reasoning), we see that a more precise formulation of Proposition \ref{prop:prod} is possible: 

\begin{corollary}
The type-theoretic formation and introduction rules for dependent products are modelled by maps of the form
\begin{align}
\lambda :&\ P(\UU)\to \UU \label{prop:prod1}\\
\Pi :&\ P(\U) \to \U \label{prop:prod2}
\end{align}
making the following square commute. 
\begin{equation}\label{diag:prod}
\xymatrix{
P(\UU)  \ar[d]_{P(p)} \ar[r]^-{\lambda} &  \UU \ar[d]^{p}\\
P(\U) \ar[r]_-{\Pi} & \U }
\end{equation}
The square is a weak pullback, with a distinguished section of the canonical map $P(\UU) \to P(\U) \times_{\U}\UU$, if and only if the elimination and $\beta$ computation rules hold, and a pullback if and only if in addition the $\eta$ computation rule holds.
\end{corollary}
A similar strengthening is also possible for the following treatment of the type constructors $\Sigma$ and $\Id$.

\begin{remark}\label{remark:substitution}
The type and term constructors $\Pi$, $\lambda$, and $-_1(-_2)$ occurring in the formation, introduction, and elimination rules are also required to respect substitutions $\sigma : \Delta\to \G$.  Specifically, consider e.g.\ the $\Pi$-formation rule:
\begin{equation}\label{eq:piform}
\begin{prooftree}
\Gtypes{A} \qquad \types{\ext{\G}{A}}{B}
\justifies
\Gtypes{\prod_{A}{B}}.
 \end{prooftree}
\end{equation}
Applying $\sigma : \Delta\to \G$ to the premises gives a new instance of the rule
\[
\begin{prooftree}
\types{\Delta}{A\sigma} \qquad \types{\ext{\Delta}{A\sigma}}{B\sigma}
\justifies
\types{\Delta}{\prod_{A\sigma}{B\sigma}}.
 \end{prooftree}
\]
On the other hand, one can instead apply $\sigma : \Delta\to \G$  to the conclusion of \eqref{eq:piform}
 to obtain
 \[
 \types{\Delta}{(\prod_{A}{B})\sigma}.
 \]
It is part of the definition of ``modelling the product rules in a category with families" that these two things should be the same,
\[
\types{\Delta}{(\prod_{A}{B})\sigma = \prod_{A\sigma}{B\sigma} } 
\]
as elements of $\U(\Delta)$.  But indeed, we have
\begin{align*}
(\prod_{A}{B})\sigma &= (\Pi \circ (A,B))\circ\sigma \\
	&= \Pi \circ ((A,B)\circ\sigma) \\
	&= \Pi \circ (A\sigma, B\sigma) \\
	&= \prod_{A\sigma}{B\sigma} 
\end{align*}
as indicated in the following diagram
\[
\xymatrix{
 \Delta \ar@/{}_{15pt}/[rd]_-{(A\sigma,B\sigma)} \ar[r]^{\sigma} & \G \ar[d]_-{(A,B)} \ar@/{}^{10pt}/[rd]^-{\prod_{A\sigma}{B\sigma}}& \\
& \sum_{A\ty\U}\U^{A} \ar[r]_-{\Pi}   & \U\,,}
\]
where the equation 
\[
(A,B)\circ\sigma \ = \ (A\sigma, B\sigma) 
\]
follows easily from proposition \ref{prop:polyclasstypes}.  The other two required equations,
\begin{align*}
(\lambda_{A}{b})\sigma &= \lambda_{A\sigma}({b\sigma}) \\
(f(a))\sigma &= (f\sigma)(a\sigma)
\end{align*}
follow similarly.
\end{remark}

\begin{remark}
Our specification differs from that in \cite{KLV} only in the specification of the objects on the left side of the square \eqref{diag:prod} via the polynomial functor $P(X) =\sum_{\term{A}{\U}}X^{A}$, rather than by applying the $\Pi$ constructor in presheaves to the generic case, as in the ``logical framework'' approach. 
\end{remark}

\subsection{Sums}

For the sum constructor $\Sigma$ we shall replace the family 
\[
\xymatrix{
\sum_{A\ty\U}\UU^{A} \ar[r] & \sum_{A\ty\U}\U^{A}
}
\]
 in diagram \eqref{diag:prod2}
 by a different one, corresponding to the different premises of the $\Sigma$-\emph{introduction rule},
\[\tag{\text{$\Sigma$-intro}}
\begin{prooftree}
\Gtypes{A} \qquad \types{\ext{\G}{A}}{B} \qquad \Gterms{a:A} \qquad \Gterms{b:B[a]}
\justifies
\Gterms{\pair{a, b}:\sum_{A}B}.
 \end{prooftree}
\]
The base object $\sum_{A\ty\U}\U^{A}$ remains the same, corresponding to the fact that the $\Sigma$-\emph{formation rule} has the same form as the one for $\Pi$, namely,
\[\tag{\text{$\Sigma$-form}}
\begin{prooftree}
\Gtypes{A} \qquad \types{\ext{\G}{A}}{B}
\justifies
\Gtypes{\sum_{A}{B}}.
 \end{prooftree}
\]
But the object over $\sum_{A\ty\U}\U^{A}$ must now classify data of the form
\begin{equation}\label{eq:sigmadata}
\big( \Gtypes{A},\ \types{\ext{\G}{A}}{B},\ \Gterms{a:A},\ \Gterms{b:B[a]} \big).
\end{equation}
This is accomplished with the following object (again constructed using the internal language in presheaves):
\[
\sum_{(A\ty\U)}\sum_{(B\ty\U^{A})}\sum_{(a : A)}B(a) \,.
\]
We have a projection $\pi$ associated to the first two sums,
\[
\xymatrix{
\sum_{(A\ty\U)}\sum_{(B\ty\U^{A})}\sum_{(a : A)}B(a) \ar[r]^{\cong} \ar[rd]_{\pi} & \sum_{(A, B)\ty(\sum_{A\ty\U}\U^{A})} \sum_{(a : A)}B(a) \ar[d] \\
& \sum_{A\ty\U}\U^{A}\,,
}
\]
and factorizations of maps of the form $(A,B):\G \to \sum_{A\ty\U}\U^{A}$ through $\pi$,
\[
\xymatrix{
& \sum_{(A\ty\U)}\sum_{(B\ty\U^{A})}\sum_{(a : A)}B(a)  \ar[d]^{\pi}\\
\G \ar[r]_{(A,B)} \ar@{.>}[ru] & \sum_{A\ty\U}\U^{A} }
\]
are then in natural, bijective correspondence with data of the form \eqref{eq:sigmadata}, as can  be proved similarly to proposition \ref{prop:polyclasstypes}.

\begin{proposition}\label{prop:sum} 
Given a natural model $p : \UU \to \U$, the type theoretic rules for (extensional) dependent sum are modelled by maps  
\begin{align}\label{eq:sumop1}
\pairmap :& \sum_{(A\ty\U)}\sum_{(B\ty\U^{A})}\sum_{(a : A)}B(a) \to \UU\\
\Sigma :& \sum_{A\ty\U}\U^{A} \to \U\label{eq:sumop2}
\end{align}
making the following diagram a pullback. 
\begin{equation}\label{diag:sigma}
\xymatrix{
 \sum_{(A\ty\U)}\sum_{(B\ty\U^{A})}\sum_{(a : A)}B(a) \ar[d]_{\pi} \ar[rr]^-{\pairmap} &&  \UU \ar[d]^{p}\\
\sum_{A\ty\U}\U^{A} \ar[rr]_-{\Sigma} && \U }
\end{equation}
\end{proposition}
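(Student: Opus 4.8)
The plan is to follow the template of Proposition \ref{prop:prod} almost verbatim, adapting each step from $\Pi$ to $\Sigma$. First I would read off the formation and introduction rules from the two classifying properties. By Proposition \ref{prop:polyclasstypes} the base $\sum_{A\ty\U}\U^{A}$ classifies pairs $(\Gtypes{A},\ \types{\ext{\G}{A}}{B})$, so the bottom map $\Sigma$ is exactly the $\Sigma$-formation rule, sending such a pair to $\Gtypes{\sum_A B}$ --- note this is the same base, and hence the same formation data, as for $\Pi$. The top object $\sum_{(A\ty\U)}\sum_{(B\ty\U^{A})}\sum_{(a:A)}B(a)$ classifies the four-tuples $(\Gtypes{A},\ \types{\ext{\G}{A}}{B},\ \Gterms{a:A},\ \Gterms{b:B[a]})$ of \eqref{eq:sigmadata}, as recorded just before the statement, so the top map $\pairmap$ is the $\Sigma$-introduction rule $\pair{a,b}$; commutativity of the square, which is part of the pullback condition, then records that $p\circ\pairmap=\Sigma\circ\pi$, i.e.\ that $\Gterms{\pair{a,b}:\sum_A B}$.

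Second, assuming the square is a pullback, I would extract the elimination and computation rules from its universal property, exactly as application and the $\beta$-rule were extracted for $\Pi$. Given $\Gterms{c:\sum_A B}$ sitting over some $(A,B):\G\to\sum_{A\ty\U}\U^{A}$, the term $c:\G\to\UU$ satisfies $p\circ c=\Sigma\circ(A,B)$, hence forms a cone over the cospan defining \eqref{diag:sigma}; the pullback supplies a unique mediating map $u:\G\to\sum_{(A\ty\U)}\sum_{(B\ty\U^{A})}\sum_{(a:A)}B(a)$ with $\pi\circ u=(A,B)$ and $\pairmap\circ u=c$. By the classifying property $u$ is a tuple $(A,B,a,b)$ whose first two entries are the given $A,B$, and I set $\pi_1 c=a$ and $\pi_2 c=b$, obtaining $\Gterms{\pi_1 c:A}$ and $\Gterms{\pi_2 c:B[\pi_1 c]}$ --- the projection form of $\Sigma$-elimination.

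Third, the computation rules $\pi_1\pair{a,b}=a$ and $\pi_2\pair{a,b}=b$ follow from uniqueness: when $c=\pair{a,b}=\pairmap\circ(A,B,a,b)$, the tuple $(A,B,a,b)$ is itself a mediating map for the square, so by uniqueness it \emph{is} the mediating map, and the projections return $a$ and $b$ on the nose. The converse direction, that the rules make \eqref{diag:sigma} a pullback, is dual to the product case and I would omit it, as there.

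The only step that is more than bookkeeping is recognizing what the \emph{uniqueness} half of the pullback condition buys: it forces surjective pairing $c=\pair{\pi_1 c,\pi_2 c}$, so this formulation models \emph{strong} sums with a definitional $\eta$-rule, and it is this that lets the projections be upgraded to the full dependent eliminator via $\mathsf{R}(c,d)=d[\pi_1 c,\pi_2 c]$. This rests on the classifying property of the top object --- that $\sum_{(A\ty\U)}\sum_{(B\ty\U^{A})}\sum_{(a:A)}B(a)$ really does classify the four-tuples \eqref{eq:sigmadata} naturally in $\G$ --- which, as noted above, requires an iterated application of Lemma \ref{lem:polyclass} together with the substitution $B[a]=B\circ(1,a)$ introduced in the $\Pi$-elimination discussion, but is otherwise parallel to Proposition \ref{prop:polyclasstypes}. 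Once that correspondence is in hand, everything downstream is a formal consequence of the universal property, exactly as for $\Pi$.
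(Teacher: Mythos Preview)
Your proposal is correct and follows essentially the same approach as the paper: read off formation and introduction from the classifying properties of the base and total objects, then use the universal property of the pullback to obtain the projections $\pi_1(c),\pi_2(c)$ as the last two components of the unique mediating map, with the computation rules following from uniqueness. The only cosmetic difference is order of exposition: the paper first records $c=\pair{\pi_1(c),\pi_2(c)}$ directly from $\pairmap\circ\tilde{c}=c$ and then derives $\pi_i\pair{a,b}=a,b$ from uniqueness, whereas you do these in the opposite order; the content is identical.
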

%

\begin{proof}
The operations \eqref{eq:sumop1} and \eqref{eq:sumop2} clearly give the $\Sigma$-introduction and formation rules, respectively.
We shall prove the \emph{extensional} elimination rule, which has the two parts
\[\tag{\text{$\Sigma$-elim}}
\begin{prooftree}
\Gterms{c:\sum_{A}B}
\justifies
\Gterms{\pi_1(c) : A}
 \end{prooftree}
\qquad\qquad
\begin{prooftree}
\Gterms{c:\sum_{A}B}
\justifies
\Gterms{\pi_2(c) : B[\pi_1(c)]}
 \end{prooftree}
\]
with associated $\Sigma$-\emph{computation rules}:
\begin{align*}\tag{\text{$\Sigma$-comp}}
\pi_1(\pair{a,b}) &= a \\
\pi_2(\pair{a,b}) &= b \\
\pair{\pi_1(c), \pi_2(c)} &= c\,.
\end{align*}
%
To show this, assume that \eqref{diag:sigma} is a pullback, let $\G$ be any object, and suppose that we have $(A, B): \G\to (\sum_{A\ty\U}\U^{A})$ and $c : \G\to\UU$ such that $\Sigma\circ a = p\circ c : \G\to\U$, which means exactly that $\terms{\G}{c : \sum_{A}B}$.  There is then a unique map
\[
\tilde{c} : \G \to  \sum_{(A\ty\U)}\sum_{(B\ty\U^{A})}\sum_{(a : A)}B(a)
\]
with $\pi\circ\tilde{c} = (A, B)$ and $\pairmap\circ\tilde{c} = c$.  Since $\tilde{c}$ is known to be uniquely of the form \eqref{eq:sigmadata}, we can write it as $$\tilde{c} = \big(A, B, \pi_1(c), \pi_2(c)\big)$$ with $\pi_1(c):A$ and $\pi_2(c): B[\pi_1(c)]$.  We then write $$\pairmap\big(A, B, \pi_1(c), \pi_2(c)\big) = \pair{\pi_1(c), \pi_2(c)}$$ accordingly.  Thus indeed $c = \pair{\pi_1(c), \pi_2(c)}$, as required.  To prove the other two $\Sigma$-computation equations, it suffices by the uniqueness of elements classified to show that, for any $a:A$ and $b:B[a]$, we have 
$$(A, B, a, b) = \big(A, B, \pi_1(\pair{a,b}), \pi_2(\pair{a,b})\big).$$
  But this is now clear, since 
\begin{align*}
\pairmap(A, B, a, b) &= \pair{a,b}\\
	&=  \pair{\pi_1(\pair{a,b}), \pi_2(\pair{a,b})}\\
	&=  \pairmap(A, B, \pi_1(\pair{a,b}), \pi_2(\pair{a,b})).
\end{align*}
Again, the converse is just as direct. 
\end{proof}

As in the case of products, the operations $\Sigma$, $\pi_1, \pi_2$, and $\pairmap$ can easily be shown to respect substitution $\sigma: \Delta\to\G$.
 
\begin{remark}
The map 
\begin{equation}
\xymatrix{
 \sum_{(A\ty\U)}\sum_{(B\ty\U^{A})}\sum_{(a : A)}B(a) \ar[d]_{\pi} \\
\sum_{A\ty\U}\U^{A}
}
\end{equation}
from \eqref{diag:sigma} can also be understood  in terms of polynomial functors.  As in \eqref{eq:polydef}, let $$P_p(X) = \sum_{A\ty\U}X^{A}$$ be the polynomial functor $P_p : \widehat{\C}\to\widehat{\C}$ determined by the map $p : \UU\to\U$.  The map $\pi$ above also determines a polynomial functor $P_\pi : \widehat{\C}\to\widehat{\C}$, again via \eqref{eq:polydef}.  These are related by
\[
P_\pi = P_p \circ P_p\ .
\]
Thus in particular the composite ${P_p}^2 = P_p\circ P_p : \widehat{\C}\to\widehat{\C}$ is also polynomial, and $\pi$ is the map representing it.  Moreover, recall from \cite{GK} that pullback diagrams of maps
\begin{equation*}\label{diag:pbpoly}
\xymatrix{
B \ar[d]_{f} \ar[r] \pbcorner &  D \ar[d]^{g}\\
A \ar[r] & C }
\end{equation*}
in $\widehat{\C}$ correspond to morphisms of the polynomial functors on $\widehat{\C}$ that they determine, $P_f \Rightarrow P_g$ (cartesian natural transformations).
Thus the pullback condition \eqref{diag:sigma} says that there is a map of polynomial functors $P_p \circ P_p \Rightarrow P_p$\,.  It is easy to see that there is also a map $1_\C \Rightarrow P_p$, determined by the terminal object of \C, with its unique term,
\begin{equation}
\xymatrix{
1 \ar[d] \ar[r] \pbcorner &  \UU \ar[d]^{p}\\
1 \ar[r] & \U .}
\end{equation}
The further investigation of this structure is left to future work (cf.~\cite{PTJ:BD}, section 2, for a related development).
\end{remark}

\begin{remark}\label{remark:catoftypes}
We mention only in passing the full internal subcategory $\mathbb{U}$ in $\widehat{\C}$ determined by $p : \UU\to\U$, which may be called the \emph{category of types}.  This presheaf of categories has $\mathbb{U}_0 = \U$ as its object of objects, and as object of arrows $\mathbb{U}_1$ the exponential
\[
\xymatrix{
B^A \ar[d] \\
\U \times \U}
\]
in $\widehat{\C}/(\U \times \U)$, where we have written $A = p_1^*(\UU)$ and $B = p_2^*(\UU)$ for the results of pulling $p : \UU \to \U$ back along the two projections $p_1, p_2 : \U \times \U \to \U$. This internal category can be seen to be cartesian closed, in virtue of the rules just given for $1$, $\Sigma$, and $\Pi$.  Indeed, the category of all types $\Gtypes{A}$ in a given context $\G$ is always cartesian closed, and substitution $\Delta \to \G$ preserves the cartesian closed structure.
\end{remark}

\subsection{Extensional identity}

The formation and introduction rues for identity types are as follows.
\[\tag{$\Id$-form}
\begin{prooftree}
\Gtypes{A} \quad 
\Gterms{a :  A}  \quad
\Gterms{b :  A} 
\justifies
\Gtypes{\id{A}(a,b)}
 \end{prooftree}
\]
\smallskip

\[\tag{$\Id$-intro}
\begin{prooftree}
\Gterms{a :  A} 
\justifies
 \Gterms{\iy(a) :  \id{A}(a,a)}
 \end{prooftree} 
\]

To interpret these, we use the ``diagonal" map $\delta:\UU\to\UU\times_\U\UU$ of a natural model $p : \UU \to \U$, formed by first taking the pullback of $p$ against itself, and then factoring the identity morphism $1:\UU\to\UU$ as indicated in the following diagram.
\[
\xymatrix{
\UU \ar@/{}_{1pc}/[ddr]_{1} \ar@{.>}[dr]|-{\delta} \ar@/{}^{1pc}/[drr]^{1} &&\\
& \UU\times_\U \UU \ar[r] \ar[d] \pbcorner & \UU \ar[d]^p \\
& \UU \ar[r]_p & \U
}
\]

\begin{proposition}\label{prop:extid} 
For a natural model $p : \UU \to \U$, the type theoretic rules for \emph{extensional} identity types are modelled by maps  
\begin{align}
\iy &: \UU \to \UU \label{prop:exteq1}\\
\Id &: \UU\times_\U \UU \to \U \label{prop:exteq2}
\end{align}
making the following diagram a pullback. 
\begin{equation}\label{diag:extid}
\xymatrix{
\UU \ar[r]^\iy  \ar[d]_-\delta & \UU \ar[d]^p \\
\UU\times_\U \UU \ar[r]_-\Id & \U
}
\end{equation}
\end{proposition}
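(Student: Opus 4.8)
The plan is to follow the same template as the proofs of Propositions \ref{prop:prod} and \ref{prop:sum}: first read off what each corner of \eqref{diag:extid} classifies, observe that $\Id$ and $\iy$ directly encode the formation and introduction rules, and then show that the pullback condition is equivalent to the extensional elimination and computation rules. First I would record the classifying properties. By construction a map $\G \to \UU\times_\U\UU$ is a pair of terms into $\UU$ agreeing after $p$, hence exactly a triple $\big(\Gtypes{A},\ \Gterms{a:A},\ \Gterms{b:A}\big)$, which I write as $(A,a,b)$ with $A = p\circ a = p\circ b$. The diagonal $\delta:\UU\to\UU\times_\U\UU$, whose two projections to $\UU$ are both the identity by construction, therefore sends a term $\Gterms{a:A}$ to the reflexive triple $(A,a,a)$. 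Consequently $\Id : \UU\times_\U\UU \to \U$ is precisely the formation rule, taking $(A,a,b)$ to the type $\Gtypes{\id{A}(a,b)}$, while commutativity of \eqref{diag:extid}, namely $p\circ\iy = \Id\circ\delta$, says exactly that $\iy$ sends $\Gterms{a:A}$ to a term $\Gterms{\iy(a):\id{A}(a,a)}$, which is the introduction rule.

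The heart of the argument is extracting the elimination and computation rules from the pullback condition. Assuming \eqref{diag:extid} is a pullback, suppose $\Gterms{c:\id{A}(a,b)}$; unwinding, this gives a triple $(A,a,b):\G\to\UU\times_\U\UU$ together with a term $c:\G\to\UU$ whose type $p\circ c$ equals $\Id\circ(A,a,b)=\id{A}(a,b)$. These assemble into a cone over the cospan $\UU\xrightarrow{p}\U\xleftarrow{\Id}\UU\times_\U\UU$, so the universal property yields a unique $u:\G\to\UU$ with $\delta\circ u=(A,a,b)$ and $\iy\circ u=c$. Now $u$ classifies a single term $\Gterms{a':A'}$, and since $\delta$ doubles its argument we have $\delta\circ u=(A',a',a')$; equating this with $(A,a,b)$ forces $A'=A$ and $a'=a=b$, which is the reflection rule $\Gterms{a=b:A}$. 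The remaining equation $\iy\circ u=c$ then reads $c=\iy(a')=\iy(a)$, giving the uniqueness rule $\Gterms{c=\iy(a):\id{A}(a,b)}$ stating that every proof of identity is reflexivity. Together these are exactly the rules governing extensional identity types.

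For the converse I would assume the extensional rules and build the lift directly: given any cone, i.e.\ a triple $(A,a,b)$ and a term $c$ of type $\id{A}(a,b)$, reflection yields $a=b$, so that $(A,a,b)=(A,a,a)=\delta\circ a$, and the uniqueness rule gives $c=\iy(a)$; the term $\Gterms{a:A}$, viewed as a map $u:\G\to\UU$, is then the required lift, satisfying $\delta\circ u=(A,a,b)$ and $\iy\circ u=c$. Uniqueness of $u$ follows because the classifying property of $\UU$ makes the term $a$ uniquely determined by the triple $(A,a,a)=\delta\circ u$. As in Remark \ref{remark:substitution}, the operations $\Id$ and $\iy$ are automatically stable under substitution $\sigma:\Delta\to\G$, since they are given by postcomposition with fixed maps of presheaves.

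I expect the only real subtlety to lie in the pullback-to-rules direction: one must check that the single universal lift $u$ really delivers \emph{both} halves of the extensional discipline at once — the definitional equation $a=b$ on terms, read off from $\delta\circ u=(A,a,b)$ via the fact that $\delta$ is diagonal, and the collapse $c=\iy(a)$ on proofs, read off from $\iy\circ u=c$. Everything else is a direct transcription of the classifying properties already established, and the routine verifications are omitted exactly as in the proofs of Propositions \ref{prop:prod} and \ref{prop:sum}.
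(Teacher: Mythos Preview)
Your proposal is correct and follows essentially the same approach as the paper: identify what $\UU\times_\U\UU$ classifies, read off formation and introduction from $\Id$ and the commutativity $p\circ\iy=\Id\circ\delta$, and then use the universal property of the pullback to extract both the reflection rule (from $\delta\circ u=(A,a,b)$) and the uniqueness rule (from $\iy\circ u=c$). You actually spell out the converse and the uniqueness of the lift more explicitly than the paper does, which simply declares the converse ``equally direct''.
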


\begin{proof}
Since maps $(A, a,b) : \G \to \UU\times_\U \UU$ correspond naturally to pairs of terms $\Gterms{a,b:A}$ of the same type $\Gtypes{A}$,
 we can set 
\[
 \id{A}(a,b) = \Id\circ (A,a,b).
\]
Then $\Id : \UU\times_\U \UU \to \U$ validates the $\Id$-formation rule.  

Moreover, given any element $a : \G \to \UU$, the commutativity of \eqref{diag:extid} means exactly that
$\Gterms{ \iy\circ a :  \id{A}(a,a)}$.  So setting
\[
\iy(a) = \iy \circ a
\]
also gives the introduction rule. The interpretation of the formation and introduction rules is then displayed by the following diagram.
\begin{equation}\label{diag:intformintroextid}
\xymatrix{
&& \UU \ar[r]^\iy  \ar[d]^-\delta & \UU \ar[d]^p \\
\G \ar@/{}^{1pc}/[rru]^a \ar[rr]^{(A,a,b)} \ar[rrd]_A && \UU\times_\U \UU \ar[d] \ar[r]_-\Id & \U \\
&& \U
}
\end{equation}

Suppose \eqref{diag:extid} is a pullback.  Then for any $(A,a,b) : \G \to \UU\times_\U \UU$ and $c: \G \to \UU$ such that $\Id(A,a,b) = p\circ c$, meaning that
\[
\Gterms{c : \id{A}(a,b)},
\]
there is a unique 
$u : \G \to \UU$
with $\delta\circ u = (A,a,b)$ and $\iy\circ u = c$.  But this means that
\[
\Gterms{a = b : A} \qquad\text{and}\qquad \Gterms{c = \iy(a) : \id{A}(a,a)}.
\]
Thus we have the standard rules for \emph{extensional} Identity types:
\[\tag{ext-$\Id$-elim}
\begin{prooftree}
\Gterms{c : \id{A}(a,b)}
\justifies
\Gterms{a = b : A}
\end{prooftree}
\qquad\qquad
\begin{prooftree}
\Gterms{c : \id{A}(a,b)}
\justifies
\Gterms{c = \iy(a) : \id{A}(a,a)}
\end{prooftree}
\]
The converse is, again, equally direct.
\end{proof}

Summing up:

\begin{theorem}\label{thm:extid}
A natural model of \emph{extensional} Martin-L\"of type theory with product, sum, and identity types is given by a small category \C\ equipped with a representable map of presheaves, $$p : \UU\to\U,$$ together with maps,
\[
\Pi,\  \lambda,\  \Sigma,\  \pairmap,\  \Id,\  \iy,
\]
as in propositions \ref{prop:prod}, \ref{prop:sum}, and \ref{prop:extid}, such that the squares \eqref{diag:prod}, \eqref{diag:sigma}, and \eqref{diag:extid} are pullbacks.
\end{theorem}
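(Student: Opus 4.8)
The plan is to assemble the theorem directly from the three structural propositions already in hand, together with the opening identification of representable maps of presheaves with categories with families. The statement is at bottom a conjunction: a natural model of extensional Martin-L\"of type theory with $\Pi$, $\Sigma$, and $\Id$ is nothing but a natural model $p:\UU\to\U$ that simultaneously models each of the three constructors, and the substance of the theorem is that each such ``modelling'' is captured, in a biconditional fashion, by the existence of the corresponding pair of operations together with the pullback condition on the associated square.

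First I would recall that, by the proposition identifying representable maps with categories with families, a natural model $p:\UU\to\U$ on a small category \C\ with terminal object is exactly a category with families, and hence already interprets the core judgements of dependent type theory --- contexts, substitutions, types and terms in context, and context extension. Modelling the full extensional theory then means additionally interpreting the three type constructors so that all of their formation, introduction, elimination, and computation rules hold.

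Next I would invoke the three propositions in turn. By Proposition \ref{prop:prod}, modelling the product rules is equivalent to the existence of maps $\lambda$ and $\Pi$ making square \eqref{diag:prod} a pullback; by Proposition \ref{prop:sum}, modelling the rules for strong dependent sums is equivalent to the existence of $\pairmap$ and $\Sigma$ making \eqref{diag:sigma} a pullback; and by Proposition \ref{prop:extid}, modelling the extensional identity types is equivalent to the existence of $\iy$ and $\Id$ making \eqref{diag:extid} a pullback. Since the specifications of the three constructors are mutually independent --- each concerns only its own operations and its own square --- requiring all three to be modelled is equivalent to requiring all three pullback squares to hold at once, which is exactly the data listed in the theorem. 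Both directions of the biconditional are thereby supplied by the corresponding directions of the three propositions.

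The one point genuinely deserving care, and the reason the result is worth stating in this form, is the substitution-stability of the interpretation; this is precisely where unwanted coherence conditions arise in other frameworks. Here, by contrast, each of the operations $\Pi,\lambda,\Sigma,\pairmap,\Id,\iy$ is by construction a morphism of presheaves, hence natural in the context $\G$, so that the required naturality equations --- such as $(\prod_{A}{B})\sigma = \prod_{A\sigma}{B\sigma}$ --- follow automatically, as spelled out for products in Remark \ref{remark:substitution} and noted to hold analogously for sums and identity. Consequently no verification beyond the three propositions is needed, and the real ``obstacle'' is simply the recognition that the presheaf formulation makes substitution-stability come for free.
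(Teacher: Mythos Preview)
Your proposal is correct and matches the paper's approach exactly: the theorem is introduced in the paper with the words ``Summing up:'' and no separate proof is given, since it is simply the conjunction of Propositions \ref{prop:prod}, \ref{prop:sum}, and \ref{prop:extid} together with the identification of representable maps with categories with families. Your additional remarks on independence of the three constructors and automatic substitution-stability are accurate elaborations of why this conjunction suffices.
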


\subsection{Intensional identity}

Models of extensional type theory can be obtained easily from locally cartesian closed categories by various methods, including \cite{Hofmann}.  We are mainly interested here in models of the \emph{intensional} theory.  The formation and introduction rules remain the same as in the extensional case, but the elimination rule takes a somewhat more complex form inspired by inductive definitions.  We shall confine attention here to the modifications required for modelling intensional \emph{identity} types, but an analogous treatment is also possible for sum and product types, which we leave for future work. 

As in \eqref{prop:exteq1} and\eqref{prop:exteq2}, we assume maps
\begin{align*}
\iy &: \UU \to \UU \\
\Id &: \UU\times_\U \UU \to \U ,
\end{align*}
but now we merely require the resulting square to commute:
\begin{equation}\label{diag:intid}
\xymatrix{
\UU \ar[r]^\iy  \ar[d]_-\delta & \UU \ar[d]^p \\
\UU\times_\U \UU \ar[r]_-{\Id} & \,\U\,.
}
\end{equation}

Once again we can set 
\[
 \id{A}(a,b) = \Id\circ (A,a,b)
\]
to validate the $\Id$-formation rule:
\[\tag{$\Id$-form}
\begin{prooftree}
\Gtypes{A} \quad 
\Gterms{a :  A}  \quad
\Gterms{b :  A} 
\justifies
\Gtypes{\id{A}(a,b)}\,.
 \end{prooftree}
\]
Also as before, given any element $a : \G \to \UU$, we  have
$\Gterms{\iy\circ a :  \id{A}(a,a)}$.  So setting
\[
\iy(a) = \iy\circ a
\]
again gives the introduction rule:
\[\tag{$\Id$-intro}
\begin{prooftree}
\Gterms{a :  A} 
\justifies
 \Gterms{\iy(a) :  \id{A}(a,a)}\,.
 \end{prooftree} 
\]

%
%

Now take the pullback of $p$ along $\Id$.  We obtain an object $I$ over $\UU\times_\U \UU$, together with a factorization $\rho = (\delta, \iy)$ of the diagonal $\delta$:
\begin{equation*}
\xymatrix{
\UU \ar@/{}_{1pc}/[ddr]_{\delta} \ar@{.>}[dr]|-{\rho} \ar@/{}^{1pc}/[drr]^{\iy} &&\\
& I \ar[r]  \ar[d]  \pbcorner & \UU \ar[d]^p \\
& \UU\times_\U \UU \ar[r]_-{\Id} & \U\,.
}
\end{equation*}
This structure serves as a ``generic identity type".
%
%
Indeed, consider the following diagram, in which the parallel vertical arrows are the evident projections, and the indicated squares are constructed as pullbacks.
\begin{equation}\label{diag:big}
\xymatrix@=3em{
\UU \ar[d]_{p} & \G\exdot A \ar[l]_c \ar[d]^{\rho_A} \ar[r] \pbcorner 					& \UU \ar[d]_{\rho} \ar[dr]^{\iy} 					& \\
\U & \G\exdot A\exdot A\exdot\Id_A \ar[l]^-C \ar@{.>}[lu]_{\jay(c)} \ar[d] \ar[r] \pbcorner 	& I \ar[d] \ar[r] \pbcorner 					& \UU \ar[d]^{p} \\
& {\G\exdot A\exdot A} \ar@<-1ex>[d] \ar@<.5ex>[d] \ar[r]_{q^2_A}  \pbcorner 	& \UU\times_\U \UU \ar[r]_-{\Id} \ar@<-1ex>[d] \ar@<.5ex>[d] 	& \U \\
& \ext{\G}{A} \ar[d]_{p_A} \ar[r]_{q_A} \pbcorner 			& \UU \ar[d]^{p} 						& \\
& \G \ar[r]_A 									& \U 									&
}
\end{equation}
The interpretation of $\types{\G \exdot A \exdot A}{\Id_A}$ is the center horizontal composite 
$$\Id\circ q^2_A\ :\ \ttexdot{\G}{A}{A }\to \U,$$
and so the context extension $\qtexdot{\G}{A}{A}{\Id_A}$ is the indicated pullback of $I$ along $q^2_A$.
Observe that $$\ttexdot{\G}{A}{A} = (\btexdot{\G}{A})\times_\G (\btexdot{\G}{A}),$$ and that the map
\begin{equation}\label{eq:substrho}
\rho_A : \btexdot{\G}{A}\to \qtexdot{\G}{A}{A}{\Id_A}
\end{equation}
factors the diagonal $\delta_A : \btexdot{\G}{A}\to \ttexdot{\G}{A}{A}$, because it is the pullback of ${\rho : \UU\to I}$, which factors the diagonal $\UU\to \UU\times_\U\UU$.  The map $\rho_A$ interprets the substitution $a \mapsto (a,a,\iy(a))$ associated to the introduction term $\iy$.

We can now state the \emph{$\Id$-elimination rule} as follows:
\smallskip
\[
\tag{$\Id$-elim}
\begin{prooftree}
\Gtypes{A} \qquad \types{\G\exdot A\exdot A\exdot \id{A}}{C}
\qquad \terms{\ext{\G}{A}}{c :  C\rho_A}
\justifies
\terms{\G\!\centerdot\! A\!\centerdot\! A\!\centerdot\! \id{A}}{\jay(c):C}
\end{prooftree}
\]
where the indicated substitution $C\rho_A$ is taken along the map $\rho_A$ just defined \eqref{eq:substrho}.  

The associated \emph{computation rule} then has the form:
\[\tag{$\Id$-comp}
\begin{prooftree}
\Gtypes{A} \qquad \types{\G\exdot A\exdot A\exdot \id{A}}{C}
\qquad \terms{\ext{\G}{A}}{c :  C\rho_A}
\justifies
\terms{\G\!\centerdot\! A}{\jay(c)\rho_A = c : C\rho_A}\,.
\end{prooftree}
\]

The elimination and computation rules are interpreted in the upper left square of the diagram \eqref{diag:big}, where the dotted arrow $\jay(c)$ indicates a choice of diagonal filler interpreting the corresponding term.  Since the rules are supposed to hold for all types $C$ and terms $c$, they are evidently equivalent to the following condition.
\begin{quote}
\emph{For any $\G:\type$ and $\Gtypes{A}$, the substitution $\rho_A$ has the left-lifting property with respect to $p : \UU\to\U$\,.}
\end{quote}
Here, recall that a map $f:A\to B$ is said to have the \emph{left lifting property} with respect to another $g: C\to D$, written $$f \pitchfork g\,,$$ if every commutative square from $f$ to $g$ has at least one diagonal filler,
\[
\xymatrix{
A \ar[d]_{f} \ar[r] & C \ar[d]^{g} \\
B \ar[r] \ar@{.>}[ur] & D.
}
\]


\begin{remark}
Let us consider the requirement that the rules must respect substitution, in the sense of remark \ref{remark:substitution},  for the present case.  The formation and introduction rules clearly satisfy this condition, since they are modeled by composition with particular maps.  Indeed, consider the diagram \eqref{diag:intformintroextid}, which gives the interpretation of formation and introduction, and take any substitution $\sigma:\Delta\to\G$.
\begin{equation}\label{diag:intformintroextidsubst}
\xymatrix@=3em{
&&& \UU \ar[r]^\iy  \ar[d]^-\delta & \UU \ar[d]^p \\
\Delta \ar@/{}^{1pc}/[rrru]^{a\sigma} \ar[r]_{\sigma} \ar@/{}_{1pc}/[rrrd]_-{(A\sigma,a\sigma,b\sigma)} & \G \ar[rru]^a \ar[rr]^{(A,a,b)} \ar[rrd]^A && \UU\times_\U \UU \ar[d] \ar[r]_-\Id & \U \\
&&& \U
}
\end{equation}
As indicated in the diagram above, we then have the required conditions:
\begin{align*}
\id{A}(a,b)\sigma &= (\Id\circ(A,a,b))\sigma = \Id\circ((A,a,b)\sigma) \\
&=  \Id\circ(A\sigma,a\sigma,b\sigma) = \id{A\sigma}(a\sigma,b\sigma)\\
\iy(a)\sigma &= (\iy\circ a)\sigma = \iy\circ (a\circ \sigma) = \iy\circ (a\sigma) = \iy(a\sigma).
\end{align*}

The corresponding condition for the elimination rule has the form:
\begin{equation*}
\jay(c)\sigma = \jay(c\sigma).
\end{equation*}

More precisely, for any substitution $\sigma :\Delta\to \G$, we require that 
\begin{equation}\label{eq:BCforJ}
\jay(c)\sigma_{\Id_{A}} = \jay(c\sigma_{A}),
\end{equation}
as indicated in the following diagram.
\begin{equation*}
\xymatrix@=3em{
\Delta\exdot A\sigma \ar[d]_{\rho_A\sigma} \ar[rr]^{\sigma_A} \pbcorner 	&& \G\exdot A \ar[d]^{\rho_A} \ar[r]^c		& \UU \ar[d]^{p} \\
\Delta\exdot A\sigma\exdot A\sigma\exdot\Id_{A\sigma} \ar[rr]_-{\sigma_{\Id_{A}}} \ar@{.>}[urrr]^{\jay(c\sigma_A)} \ar[d] \ar[rr] \pbcorner && \G\exdot A\exdot A\exdot\Id_A \ar@{.>}[ur]|-{\jay(c)} \ar[r]_-C \ar[d] \pbcorner 	& \U \\
\Delta \ar[rr]_\sigma									&& \G				&
}
\end{equation*}
In the cases of $\Pi$ and $\Sigma$, the analogous condition followed from the uniqueness of a certain map into a pullback.  But in this case, there is no such uniqueness, and one must instead require the existence of a family of maps $\jay(c)$, in all situations of the form
\begin{equation}\label{diag:Idfill}
\xymatrix{
\G\exdot A \ar[d]_{\rho_A} \ar[r]^c		& \UU \ar[d]^{p} \\
\G\exdot A\exdot A\exdot\Id_A \ar@{.>}[ur]|-{\jay(c)} \ar[r]_-C & \,\U,
}
\end{equation}
and selected in such a way as to be compatible with all maps of the form $\sigma :\Delta\to \G$, in the sense of \eqref{eq:BCforJ}.

We shall take a different approach in what follows: as in the cases of $\Pi$ and $\Sigma$, we shall specify a single map $\jay$ in a suitable universal case, which then gives rise to the individual maps $\jay(c)$ in a uniform way, which is then automatically natural in the sense of \eqref{eq:BCforJ}.
\end{remark}

We require a preliminary definition.
Let $f : A\to B$ and $g : C\to D$ be maps in a  cartesian closed category $\mathcal{C}$, and consider the following square, which always commutes.
\begin{equation}\label{diag:intLLP}
\xymatrix@=3em{
C^B \ar[d]_{g^B} \ar[r]^{C^f} & C^A \ar[d]^{g^A} \\
D^B \ar[r]_{D^f} & D^A.
}
\end{equation}
Taking the pullback of $D^f$ and $g^A$, we obtain a canonical comparison map $c = (g^B, C^f)$ as indicated in the following.
\begin{equation}\label{diag:intLLP2}
\xymatrix{
C^B \ar[dd]_{g^B} \ar[rr]^{C^f} \ar@{.>}[dr]|-{\,c\,} && C^A \ar[dd]^{g^A} \\
& D^B\times_{D^A} C^A \ar[ld] \ar[ru] \pbcorner & \\
D^B \ar[rr]_{D^f} && D^A.
}
\end{equation}

\begin{definition}\label{def:LLP}
A \emph{left-lifting structure $s$ for $f$ with respect to $g$}, written
$$f\ \pitchfork_s\ g$$
is a section of the comparison map $c = (g^B, C^f)$ in \eqref{diag:intLLP2}, 
\begin{equation}
\xymatrix{
C^B \ar[rr]_-{c} && D^B\times_{D^A} C^A  \ar@/_{2pc}/@{.>}[ll]^{s}\,.
}
\end{equation}
\end{definition}

\begin{lemma}\label{lem:extintLLP}
Let $f : A\to B$ and $g : C\to D$ be maps in a locally cartesian closed category $\mathcal{C}$.
The following conditions are equivalent.
\begin{enumerate}
\item\label{lem:intLLP} $f$ has a left-lifting structure $s$ with respect to $g$, 
$$f\ \pitchfork_s\ g.$$
 
\item\label{lem:expLLP} For each object $X$ and maps $\alpha, \beta$ as indicated in the diagram below (making the outer, stretched square commute), 
\begin{equation}\label{diag:expLLP}
\xymatrix@=3em{
X \ar@/{}_{1pc}/[rdd]_{\alpha}  \ar@{.>}[rd]|-{\gamma(\alpha,\beta)} \ar@/{}^{1pc}/[rrd]^{\beta} & & \\
	& C^B \ar[d]^{g^B} \ar[r]_{C^f} 	& C^A \ar[d]^{g^A} \\
	& D^B \ar[r]_{D^f} 				& D^A.
}
\end{equation}
there is an associated map $\gamma(\alpha,\beta)$ as shown (making the evident triangles commute), and the assignment is natural in $X$ in the sense that for any $u : Y\to X$,
\[
\gamma(\alpha,\beta)\circ u = \gamma(\alpha\circ u,\beta\circ u).
\]

\item\label{lem:extLLP} For all objects $X$, $$(X\times f) \pitchfork g$$ \emph{naturally in $X$}, in the sense that there exists a family $c(a,b)$ of diagonal fillers
\[
\xymatrix{
X\times A \ar[d]_{X\times f} \ar[rr]^{a} && C \ar[d]^{g} \\
X\times B \ar[rr]_{b} \ar@{.>}[urr]|-{c(a,b)} && D
}
\]
that are natural in $X$, meaning that for every $u : Y \to X$, 
\[
c(a,b)\circ(u\times B) = c(a\circ(u\times A),b\circ(u\times B))
\]
as in the following diagram, where we have written $c = c(a,b)$ and $c' = c(a\circ(u\times A),b\circ(u\times B))$.
\begin{equation*}
\xymatrix{
Y\times A	\ar[rr]^{u\times A} \ar[d]_{Y\times f} && X\times A  \ar[d]^{^{X\times f} } \ar[r]^-a		& C \ar[d]^{g} \\
Y\times B \ar[rr]_{u\times B} \ar@{.>}[urrr]^{c'}  && X\times B  \ar@{.>}[ur]_{c} \ar[r]_-b	& D
}
\end{equation*}
\end{enumerate}
\end{lemma}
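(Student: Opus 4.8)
The plan is to establish the equivalences by two essentially formal identifications: condition~\eqref{lem:expLLP} is the functor-of-points (Yoneda) reformulation of condition~\eqref{lem:intLLP}, and condition~\eqref{lem:extLLP} is its image under the cartesian closed transpose. (Only the cartesian closed structure of $\mathcal{C}$ is used; local cartesian closure plays no role in this lemma.) The key preliminary observation I would record is the universal property of the pullback $D^B\times_{D^A}C^A$ in \eqref{diag:intLLP2}: a map $m : X\to D^B\times_{D^A}C^A$ is exactly a pair $\alpha : X\to D^B$, $\beta : X\to C^A$ with $D^f\circ\alpha = g^A\circ\beta$, which is precisely the commutativity of the outer square of \eqref{diag:expLLP}. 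Under this identification the two triangles in \eqref{diag:expLLP} say that $g^B\circ\gamma(\alpha,\beta)=\alpha$ and $C^f\circ\gamma(\alpha,\beta)=\beta$, i.e.\ that $c\circ\gamma(\alpha,\beta)=m$, where $c$ is the comparison map of Definition~\ref{def:LLP}.

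For \eqref{lem:intLLP}$\Rightarrow$\eqref{lem:expLLP}, given a section $s$ of $c$ I would set $\gamma(\alpha,\beta)=s\circ m$; then $c\circ\gamma(\alpha,\beta)=c\circ s\circ m=m$ gives both triangles, and naturality $\gamma(\alpha,\beta)\circ u=\gamma(\alpha\circ u,\beta\circ u)$ holds because precomposing $m$ with $u:Y\to X$ corresponds to the pair $(\alpha\circ u,\beta\circ u)$. Conversely, for \eqref{lem:expLLP}$\Rightarrow$\eqref{lem:intLLP} I would apply the natural family to the generic element, namely the identity on $D^B\times_{D^A}C^A$ regarded as the pair of projections $(\pi_1,\pi_2)$, and put $s=\gamma(\pi_1,\pi_2)$; the triangle identities force the two components of $c\circ s$ to be $\pi_1$ and $\pi_2$, so $c\circ s=\identity$. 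This is nothing more than the Yoneda lemma, identifying sections of $c$ with natural right inverses to postcomposition by $c$.

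For \eqref{lem:expLLP}$\Leftrightarrow$\eqref{lem:extLLP} I would transpose the entire lifting problem across the adjunctions $(-)\times A\dashv(-)^A$ and $(-)\times B\dashv(-)^B$. A map $a:X\times A\to C$ transposes to a map $\beta:X\to C^A$ and a map $b:X\times B\to D$ transposes to a map $\alpha:X\to D^B$; since postcomposition by $g$ transposes to $g^A$ (resp.\ $g^B$) and precomposition by $f$ in the exponent transposes to $D^f$ (resp.\ $C^f$), the commuting square $g\circ a=b\circ(X\times f)$ of \eqref{lem:extLLP} transposes exactly to $g^A\circ\beta=D^f\circ\alpha$, the compatibility of $(\alpha,\beta)$. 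A filler $c(a,b):X\times B\to C$ transposes to a map $X\to C^B$, and its two defining equations $c(a,b)\circ(X\times f)=a$ and $g\circ c(a,b)=b$ transpose to $C^f\circ(-)=\beta$ and $g^B\circ(-)=\alpha$; thus the transpose of $c(a,b)$ is precisely $\gamma(\alpha,\beta)$, and since the transpose is a bijection the choices of filler in \eqref{lem:extLLP} correspond bijectively to the choices of $\gamma$ in \eqref{lem:expLLP}.

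The step requiring the most care is matching the two naturality clauses under this transpose, rather than the lifting conditions themselves. In \eqref{lem:extLLP} naturality is phrased via precomposition with $u\times A$ and $u\times B$ for $u:Y\to X$, whereas in \eqref{lem:expLLP} it is phrased via precomposition with $u$ alone. These coincide because the adjunction isomorphisms are natural in the domain variable: the transpose of $c(a,b)\circ(u\times B)$ is $\gamma(\alpha,\beta)\circ u$, while the transpose of $c(a\circ(u\times A),\,b\circ(u\times B))$ is $\gamma(\alpha\circ u,\beta\circ u)$, so the naturality square of \eqref{lem:extLLP} is exactly the transpose of that of \eqref{lem:expLLP}. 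Once this bookkeeping is in place the three conditions are seen to carry the same data, and the equivalence follows; everything beyond it is the formal content of the Yoneda lemma and the exponential adjunction.
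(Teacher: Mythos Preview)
Your proposal is correct and follows essentially the same approach as the paper: the equivalence \eqref{lem:intLLP}$\Leftrightarrow$\eqref{lem:expLLP} is obtained by passing between a section $s$ of the comparison map and the generic element of the pullback via $\gamma(\alpha,\beta)=s\circ(\alpha,\beta)$ and $s=\gamma(\pi_1,\pi_2)$, and \eqref{lem:expLLP}$\Leftrightarrow$\eqref{lem:extLLP} is obtained by exponential transpose. If anything you are more careful than the paper, which leaves the naturality verification implicit; your explicit check that naturality in $X$ transports correctly across the adjunction is a welcome addition.
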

\begin{proof} (Sketch)
To show that \ref{lem:intLLP} implies \ref{lem:expLLP}, suppose $f\ \pitchfork_s\ g$ and take any $\alpha$ and $\beta$ making the following commute.
\begin{equation*}
\xymatrix@=3em{
X \ar@/{}_{1pc}/[rdd]_{\alpha}  \ar@/{}^{1pc}/[rrd]^{\beta} & & \\
	& C^B \ar[d]^{g^B} \ar[r]_{C^f} 	& C^A \ar[d]^{g^A} \\
	& D^B \ar[r]_{D^f} 				& D^A.
}
\end{equation*}
Then as $\gamma(\alpha,\beta) : X\to C^B$ we can take the composite map
\[\xymatrix{
X \ar[r]^-{(\alpha,\beta)} & D^B\times_{D^A}C^A \ar[r]^-{s} & C^B.
}
\]
Conversely, in diagram \eqref{diag:expLLP} let $X = D^B\times_{D^A}C^A$ and let $\alpha$ and $\beta$ be the projections from the pullback.  The resulting map $\gamma : D^B\times_{D^A}C^A \to C^B$ is then the required section $s$. 

Now assume condition \ref{lem:expLLP}.  To prove \ref{lem:extLLP}, take any $X$ and suppose we have $a : X\times A\to C$ and $b: X\times B\to D$ making the following commute.
\[
\xymatrix{
X\times A \ar[d]_{X\times f} \ar[rr]^{a} && C \ar[d]^{g} \\
X\times B \ar[rr]_{b} && D
}
\]
transposing, we obtain a commutative diagram
\begin{equation*}
\xymatrix@=3em{
X \ar@/{}_{1pc}/[rdd]_{\tilde{a}}  \ar@/{}^{1pc}/[rrd]^{\tilde{b}} & & \\
	& C^B \ar[d]^{g^B} \ar[r]_{C^f} 	& C^A \ar[d]^{g^A} \\
	& D^B \ar[r]_{D^f} 				& D^A.
}
\end{equation*}
Thus there is a map $\gamma(\tilde{a},\tilde{b}) :  X \to C^B$ making the evident triangles commute.  Transposing again provides the required map
\[
c(a,b) = \widetilde{\gamma(\tilde{a},\tilde{b})} : X\times B \to C.
\]
The converse is just as direct.
\end{proof}
%

\begin{proposition}\label{prop:intid} 
Given a natural model $p : \UU \to \U$,  the type theoretic rules for \emph{intensional} identity types are modelled by maps  
\begin{align}
\iy &: \UU \to \UU \label{prop:inteq1}\\
\Id &: \UU\times_\U \UU \to \U \label{prop:inteq2}
\end{align}
with $p\circ \iy = \Id\circ \delta$, and such that the canonical map $(\delta,\iy)$
\begin{equation*}
\xymatrix{
\UU \ar@/{}_{1pc}/[ddr]_{\delta} \ar@{.>}[dr]|-{(\delta,\iy)} \ar@/{}^{1pc}/[drr]^{\iy} &&\\
& I \ar[r]  \ar[d]  \pbcorner & \UU \ar[d]^p \\
& \UU\times_\U \UU \ar[r]_-{\Id} & \U
}
\end{equation*}
has a left-lifting structure $j$ with respect to $p$, when both are regarded as maps over $\U$,
$$(\delta,\iy)\ \pitchfork_j\ \U^*(p).$$

\end{proposition}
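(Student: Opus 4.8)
The plan is to prove the equivalence exactly as in the cases of $\Pi$ and $\Sigma$: reduce the type-theoretic rules to a lifting condition, then show that the stated left-lifting structure is precisely what provides a coherent, substitution-respecting family of interpretations. First I would observe that since the formation and introduction rules are interpreted by composition with the fixed maps $\Id$ and $\iy$ — exactly as in Proposition \ref{prop:extid} — the only content beyond the commuting square \eqref{diag:intid} is the elimination and computation rules. As already noted in the excerpt, these two rules together are equivalent to the requirement that, for every $\G$ and every $\Gtypes{A}$, the substitution $\rho_A : \btexdot{\G}{A} \to \qtexdot{\G}{A}{A}{\Id_A}$ have the left-lifting property against $p$, witnessed by fillers $\jay(c)$ as in \eqref{diag:Idfill}, and that these fillers be natural in $\sigma : \Delta\to\G$ in the sense of \eqref{eq:BCforJ}.

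The key step is then to recognize that this entire family of lifting problems, ranging over all $\G$ and all $A\in\U(\G)$, is exactly the content of a single left-lifting structure of $\rho=(\delta,\iy)$ against $p$ in the \emph{slice over} $\U$. The generic identity type, namely the map $\rho : \UU \to I$ factoring the diagonal $\delta$, is the universal instance; each concrete $\rho_A$ arises as a pullback of $\rho$ along $q^2_A$, as displayed in \eqref{diag:big}. Working in the locally cartesian closed category $\widehat\C$, or rather in $\widehat\C/\U$ where the map $\U^*(p)$ sits, I would apply Lemma \ref{lem:extintLLP}. The passage from condition \ref{lem:intLLP} (a section $s$ of the comparison map, i.e.\ a left-lifting structure) to condition \ref{lem:extLLP} (a family of diagonal fillers $c(a,b)$ natural in $X$) converts the single section $j$ into precisely a family of fillers indexed by the ``parameter'' object, with the naturality clause $c(a,b)\circ(u\times B) = c(a\circ(u\times A),b\circ(u\times B))$ matching the substitution-compatibility \eqref{eq:BCforJ}.

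Concretely, I would match the ingredients as follows. The objects $\G$ play the role of the parameter $X$, the map being lifted on the left is $\rho_A$ (a pullback of $\rho=(\delta,\iy)$), and the map on the right is $p$, viewed over $\U$ via $\U^*(p)$ so that the base of every lifting square is tracked correctly; a commutative square from $\rho_A$ to $p$ is exactly a pair $(c, C)$ as in \eqref{diag:Idfill}, and a diagonal filler is exactly a term $\jay(c)$ validating $\Id$-elim, with the triangle commutativity giving $\Id$-comp. By the naturality built into Lemma \ref{lem:extintLLP}, a \emph{single} structure $j$ for $(\delta,\iy)\pitchfork p$ over $\U$ produces all the $\jay(c)$ uniformly and automatically respecting substitution, which is what frees us from having to posit the coherent family by hand. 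The converse direction — that a model of the intensional rules yields such a structure $j$ — follows by running the same correspondence backwards: the universal lifting problem is the one obtained by taking $\G = \UU\times_\U\UU$ (the generic case) with the identity data, whose filler is the required section.

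The hard part will be getting the slicing and the naturality bookkeeping exactly right: one must be careful that ``over $\U$'' means the lifting structure lives in $\widehat\C/\U$, so that the relevant exponentials in Lemma \ref{lem:extintLLP} are computed in the slice and the parameter object $X$ in condition \ref{lem:extLLP} genuinely corresponds to arbitrary contexts $\G$ equipped with a type $A$ (equivalently, to objects $\btexdot{\G}{A}$ over $\U$). Verifying that the pullback-stability of $\rho_A = $ (pullback of $\rho$) combines correctly with the $X$-naturality of Lemma \ref{lem:extintLLP} to yield precisely the Beck--Chevalley-type condition \eqref{eq:BCforJ}, rather than something weaker or stronger, is the step I expect to require the most care; everything else is a direct translation between the lifting language of Definition \ref{def:LLP} and the inference-rule language of $\Id$-elim and $\Id$-comp.
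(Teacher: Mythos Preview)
Your proposal is correct and follows essentially the same route as the paper: apply Lemma~\ref{lem:extintLLP} in the slice $\widehat\C/\U$, identify the parameter object $X$ with a pair $(\G,A)$ via $A:\G\to\U$, and unwind to obtain exactly the filling problem \eqref{diag:Idfill} with naturality giving \eqref{eq:BCforJ}. One small correction to your converse sketch: the ``generic case'' furnished by Lemma~\ref{lem:extintLLP} is not $\G=\UU\times_\U\UU$ but the pullback object $D^B\times_{D^A}C^A$ of \eqref{diag:intLLP2} (computed in $\widehat\C/\U$), which is not representable; the step from fillers natural in representable $\G$ to a global section $j$ therefore uses that every presheaf is a colimit of representables.
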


\begin{proof}
Let us write $\rho = (\delta,\iy)$.  By  lemma \ref{lem:extintLLP}, a left-lifting structure $j$ for $\rho$ with respect to $p$,  both  regarded as maps over $\U$,  is equivalent to a natural (in $X$) choice of diagonal fillers $j(a,b)$ for all squares over $\U$ of the form
\[
\xymatrix{
X\times_\U \UU \ar[d]_{X\times_\U{\rho}} \ar[rr]^{a} && \U^*\UU \ar[d]^{\U^*p} \\
X\times_\U I \ar[rr]_{b} \ar@{.>}[urr]|-{\ j(a,b)\ } && \U^*\U,
}
\]
where $\U^* : \Chat \to \Chat/\U$ is the base change.  Letting $X = (A : \Gamma \to \U)$ as an object over $\U$, and consulting \eqref{diag:big}, we see that:
\begin{align*}
\dom(X\times_\U \UU)\ &=\ \G\exdot A \\
\dom(X\times_\U I) \ &=\  \G\exdot A\exdot A\exdot\Id_A  \\
\dom(X\times_\U{\rho})\ &=\ \rho_A : \G\exdot A \to  \G\exdot A\exdot A\exdot\Id_A 
\end{align*}
Thus, transposing the above diagram to forget the base $\U$, we arrive at the equivalent filling problem
\[
\xymatrix{
\G\exdot A \ar[d]_{\rho_A} \ar[rr] && \UU \ar[d]^{p} \\
 \G\exdot A\exdot A\exdot\Id_A  \ar[rr]  \ar@{.>}[urr]|-{\ \jay\ }&& \U.
}
\]
Comparing this to the diagram \eqref{diag:Idfill}, we see that the assumed left-lifting structure $j$ indeed provides a choice of fillers $\jay$ that is natural in $\G$, as required to correctly interpret the elimination rule.
\end{proof}

\section{Supporting a natural model}

The representability of a natural transformation $p : \UU\to\U$ imposes conditions on the maps in $\C$ that represent it (cf.~corollary \ref{cor:repnattransstableclass}), and the requirement that $p$ should model the type-forming operations $\Sigma, \Pi, \Id$ imposes further conditions on those maps. Our goal is to determine conditions on a category $\C$ that are sufficient to  ensure that it carries a natural model of type theory.

Let $p : \UU\to\U$ be a representable natural transformation over $\C$.  Recalling our convention from Section \ref{subsection:algebraic}, for each object $C\in\C$ and each element $A\in\U(C)$, we have a selected object $\btexdot{C}{A}$, a map $p_{A}:\btexdot{C}{A} \to C$ and an element $q_A\in \UU(\btexdot{C}{A})$, all fitting into a pullback square of the form:
\begin{equation}\label{diag:prep}
\xymatrix{
\ext{C}{A} \ar[d]_{p_A} \ar[r]^{q_A} \pbcorner &  \UU \ar[d]^{p}\\
C \ar[r]_{A}   & \ \U\,.}
\end{equation}
Such a map $p_{A}:\btexdot{C}{A} \to C$ arising as a canonical pullback of $p$ will be called a \emph{display map}, and the corresponding pullback square, a \emph{display square for~$p_A$}. 

\begin{remark}\label{remark:displaypb}
Observe that a display map has a pullback along any map, even though $\C$ is not assumed to have all pullbacks.  
Indeed, for any display map $p_{A}:\btexdot{C}{A} \to C$ and any map $s : D\to C$, there is  a uniquely determined pullback square with $p_{As} : \ext{D}{As} \to D$ a display map, as shown on the left in the diagram below:
\[
\xymatrix{
\ext{D}{As} \ar@/{}^{2pc}/[rr]^{q_{As}} \ar[d]_{p_{As}} \pbcorner \ar@{.>}[r] & \ext{C}{A} \ar[d] \ar[d]_{p_A} \ar[r]^{q_A} \pbcorner &  \UU \ar[d]^{p}\\
D \ar@/{}_{2pc}/[rr]_{As} \ar[r]_{s}   & C \ar[r]_{A}   & \U}
\]
because the outer rectangle and the righthand square are canonical pullbacks.
\end{remark}

Conversely, let $\D\subseteq\C_1$ be a class of maps in $\C$ that is closed under all pullbacks along arbitrary maps in $\C$, in the sense that (i) the pullback of a map in \D\ along any map in \C\ always exists, and (ii) given any pullback square 
\begin{equation}
\xymatrix{
A\ar[d]_{g} \ar[r] \pbcorner &  C\ar[d]^{f}\\
B \ar[r]   & D}
\end{equation}
if $f\in\D$ then $g\in\D$.  Observe that $\D$ is closed under isomorphisms in the arrow category.  Call such a class $\D\subseteq\C_1$  a \emph{stable class of maps} in $\C$.  We define two presheaves $\D_0, \D_1$ and a natural transformation $\pi : \D_1\to\D_0$ between them as follows:
\begin{align*}
\D_1(C)\ &=\ \{ (a, d)\in \C_1\times \D\ |\ \cod(a) = \dom(d) \} \\
\D_0(C)\ &=\ \{ (b, d) \in  \C_1\times \D\ |\ \cod(b) = \cod(d) \}\\
\pi_C &: \D_1(C) \to \D_0(C) \\
& \pi_C(a,d)\ =\ (d\circ a, d).
\end{align*}
Schematically, we have the following situation.
\begin{align*}
\D_1(C)\ &=\ \left\{ \vcenter{ \xymatrix{& \cdot \ar[d]^{d\,\in\,\D} \\
				C\ar[ru]^{a} & \cdot} }
			\right \}	\\
		&\xymatrix{ \strut \ar[d]_{\pi_C}\\
		\strut } \\
\D_0(C)\ &=\ \left\{ \vcenter{ \xymatrix{& \cdot \ar[d]^{d\,\in\,\D} \\
				C \ar[r]_{b} & \cdot} }
			\right\}
\end{align*}
The action of the presheaves is by precomposition in the first factor, thus for $s : C' \to C$, we let
\begin{align*}
\D_1(s)(a, d)\ &=\ (a\circ s, d), \\
\D_0(s)(b, d)\ &=\ (b\circ s, d) .
\end{align*}
This is plainly (strictly) functorial.  The component $\pi_C$ is simply composition with the arrow $d$ in the second factor, 
which is obviously natural.

\begin{remark}\label{remark:Dalternate}
The natural transformation $\pi : \D_1\to\D_0$ can be defined explicitly by
\[
\coprod_{d\in\D}\y{d} : \coprod_{d\in\D}\y\dom(d) \to \coprod_{d\in\D}\y\cod(d)\,.
\]
The current description of $\pi: \D_1\to \D_0$ is more closely related to a coherence theorem for certain kinds of indexed categories (respectively fibrations), which takes the pseudofunctor $\D : \C\op\to\Cat$ given by a stable class of maps, with action by pullback, and returns an equivalent presheaf of categories, i.e.\ a ``strictification" of the pseudofunctor (or ``splitting" of the associated fibration). Several such strictifications have been studied previously: this one is left adjoint to the inclusion of functors into pseudofunctors, and there is also a right adjoint, and others (all three are attributed to Giraud in cf.~\cite{Streicher}, which also gives the relation to work of Benabou). The use of this left adjoint construction to obtain a model of intensional type theory is the main result of \cite{LW}, and the development in this section can be regarded as a reformulation, to the present setting of natural models, of results obtained in \cite{LW} for the closely related setting of categories with attributes. Also see remark \ref{rem:credit} below.
\end{remark}

\begin{proposition}\label{prop:Dtorepnattrans}
Let $\D\subseteq\C_1$ be a stable class of maps in $\C$.  Then the natural transformation  $\pi : \D_1\to \D_0$ just defined is representable.
\end{proposition}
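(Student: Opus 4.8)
The plan is to verify Definition~\ref{def:rep} directly. Fix an object $C\in\C$ and an element $x = (b,d)\in\D_0(C)$, so that $d\in\D$ and $b : C\to\cod(d)$. The representing object of the fibre of $\pi$ over $x$ will be the pullback of $d$ along $b$, taken in $\C$ itself; the whole point is that the stability hypothesis on $\D$ is exactly what guarantees this pullback exists.

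First I would form, using that $\D$ is stable, the pullback square in $\C$
\[
\xymatrix{
D \ar[d]_{p} \ar[r]^{a} \pbcorner & \dom(d) \ar[d]^{d} \\
C \ar[r]_{b} & \cod(d),
}
\]
whose existence (with $p\in\D$) is furnished by closure of $\D$ under pullback along the arbitrary map $b$. Setting $y = (a,d)\in\D_1(D)$, the commutativity $d\circ a = b\circ p$ says precisely that $\pi_D(y) = (d\circ a, d) = (b\circ p, d) = \D_0(p)(x)$, so that the square of presheaves
\[
\xymatrix{
\y{D} \ar[d]_{\y p} \ar[r]^-{y} \pbcorner & \D_1 \ar[d]^{\pi} \\
\y{C} \ar[r]_-{x} & \D_0
}
\]
commutes.

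It remains to check that this presheaf square is a pullback, and here I would simply unwind the Yoneda description of the fibre product. An element of $\y{C}\times_{\D_0}\D_1$ at stage $C'$ is a pair $\big(s,(a',d')\big)$ with $s : C'\to C$ and $(a',d')\in\D_1(C')$ satisfying $\pi_{C'}(a',d') = \D_0(s)(x)$; expanding both sides forces $d'=d$ and $d\circ a' = b\circ s$. Thus the datum is exactly a cone over the cospan $C\xrightarrow{b}\cod(d)\xleftarrow{d}\dom(d)$ with vertex $C'$, and by the universal property of the pullback $D$ such cones correspond bijectively, and naturally in $C'$, to maps $t : C'\to D$ with $p\circ t = s$ and $a\circ t = a'$. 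This is precisely the assertion that $\y{D}$, equipped with $\y p$ and $y$, is the fibre product, i.e.\ that the square is a pullback.

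The only real content is the reduction carried out in the first two paragraphs: recognizing that, after Yoneda, representability of $\pi$ over $(b,d)$ amounts to nothing more than the existence in $\C$ of the pullback of the $\D$-map $d$ along $b$ --- which is exactly what a stable class supplies. Once this is seen, the verification of the universal property is routine bookkeeping, matching the projections $p$ and $a$ of the $\C$-pullback against the two components of an element of $\D_1$. Since the argument is uniform in $x$, it establishes representability of $\pi$ in the sense of Definition~\ref{def:rep}.
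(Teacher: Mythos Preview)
Your proof is correct and follows essentially the same approach as the paper: form the pullback of the $\D$-map $d$ along $b$ in $\C$, take the resulting object as the representing object, and use the pair $(a,d)$ as the element of $\D_1$. The paper stops at ``the proof that \eqref{diag:prep2} is a pullback is a routine unwinding of the definitions,'' whereas you actually carry out that unwinding; otherwise the arguments are the same.
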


\begin{proof}
Let $C\in\C$ and $A\in\D_0(C)$.  We require an object $\btexdot{C}{A}$, a map $p_{A}:\btexdot{C}{A} \to C$ and an element $q_A\in \D_1(\btexdot{C}{A})$ fitting into a pullback square of the form:
\begin{equation}\label{diag:prep2}
\xymatrix{
\ext{C}{A} \ar[d]_{p_A} \ar[r]^{q_A} \pbcorner &  \D_1\ar[d]^{\pi}\\
C \ar[r]_{A}   & \,\D_0\,.}
\end{equation}
Now $A\in\D_0(C)$ is a cospan of the form, say,
\[
\xymatrix{& A_1 \ar[d]^{d_A} \\
				C \ar[r]_{|A|} & A_0}
				\]
with $d_A \in \D$.  So we can take a pullback to define $p_{A}:\btexdot{C}{A} \to C$ and $q'_A $  as indicated in:
\begin{equation}\label{diag:prep3}
\xymatrix{
\ext{C}{A} \ar[d]_{p_A} \ar[r]^{q'_A} \pbcorner &  A_1\ar[d]^{d_A}\\
C \ar[r]_{|A|}   & A_0\,.}
\end{equation}
Let $q_A \in \D_1(\btexdot{C}{A})$ be defined by  $q_A = (q'_A, d_A)$. To see that the square \eqref{diag:prep2} commutes, observe that 
\[
\pi_{\btexdot{C\,}{\,A}}(q'_A, d_A) = (d_A\circ q'_A, d_A) = (|A|\circ p_A, d_A) = \D_0(p_{A})(|A|, d_A) = \D_0(p_{A})(A).
\]
The proof that \eqref{diag:prep2} is a pullback is a routine unwinding of the definitions.
\end{proof}

\begin{corollary}\label{cor:repnattransstableclass}
A representable natural transformation determines a stable class of maps $\D\subseteq \C_1$, namely all those maps isomorphic to display maps, and every stable class of maps $\D\subseteq \C_1$ is determined in this way by a representable natural transformation $\pi : \D_1\to\D_0$.
\end{corollary}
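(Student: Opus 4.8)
The plan is to establish the stated correspondence in two halves, each resting on a result already in hand: the forward direction on Remark \ref{remark:displaypb} (display maps are stable under pullback, and the pullback can be taken to be a display map), and the reverse direction on Proposition \ref{prop:Dtorepnattrans} (the transformation $\pi : \D_1 \to \D_0$ built from a stable class is representable). Neither half is deep; the content is almost entirely a matter of tracking isomorphisms in the arrow category and invoking uniqueness of pullbacks up to isomorphism.

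For the first half, starting from a representable $p : \UU \to \U$, I would \emph{define} $\D$ to be the class of all maps in $\C_1$ that are isomorphic, in the arrow category, to a display map of $p$. Closure under isomorphism is then immediate, so the only thing to verify is stability. Suppose we are given a pullback square whose right-hand vertical map $f$ lies in $\D$; then $f$ is isomorphic to some display map $p_A$, and transporting the square across this isomorphism (a pullback square remains a pullback when the right vertical map is replaced by an isomorphic arrow) exhibits the left-hand vertical map $g$ as a pullback of $p_A$ along the composite bottom edge. By Remark \ref{remark:displaypb} that pullback may be realized by a canonical display square, so the given $g$ is isomorphic to the resulting display map by uniqueness of pullbacks up to isomorphism; hence $g \in \D$.

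For the second half, let $\D \subseteq \C_1$ be an arbitrary stable class. Proposition \ref{prop:Dtorepnattrans} already supplies the representable transformation $\pi : \D_1 \to \D_0$, so it remains only to identify the stable class it \emph{determines}---the maps isomorphic to display maps of $\pi$---with $\D$ itself. In one direction, inspection of the construction \eqref{diag:prep3} shows that each display map $p_A$ of $\pi$ arises as a pullback of some $d_A \in \D$; since $\D$ is stable, $p_A \in \D$, and as $\D$ is closed under arrow-category isomorphism, every map isomorphic to a display map again lies in $\D$. In the other direction, given any $d : X \to Y$ in $\D$, I would take $C = Y$ together with the cospan $A = (\identity_Y, d) \in \D_0(Y)$; the associated display map is the pullback of $d$ along $\identity_Y$, which is $d$ up to canonical isomorphism, so $d$ is itself (isomorphic to) a display map of $\pi$. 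The two inclusions yield equality of the classes.

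The one point that repays care is the matching, in each half, of the abstract pullback produced by a given square with the specific canonical display square guaranteed by Remark \ref{remark:displaypb}: one must appeal to uniqueness of pullbacks up to isomorphism and check that the comparison is a genuine \emph{arrow}-category isomorphism (commuting with the projections down to the base), so that membership in $\D$---which is phrased up to isomorphism of arrows---is actually transferred. Everything else is a routine unwinding of the definitions of $\D_0$, $\D_1$, $\pi$, and the display squares, and I would leave those verifications to the reader.
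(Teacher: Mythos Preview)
Your proposal is correct and follows essentially the same route as the paper: stability via Remark~\ref{remark:displaypb}, representability via Proposition~\ref{prop:Dtorepnattrans}, and the identity cospan $(\identity_Y,d)$ to realize each $d\in\D$ as a display map of $\pi$. You are simply more explicit than the paper about the isomorphism bookkeeping and about checking both inclusions in the reverse direction, which is all to the good.
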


\begin{proof}
Every display map is clearly in \D, and a display map has a pullback along any map by Remark \ref{remark:displaypb}.  It follows directly that $\D$ is stable.  Conversely, every map $d : D\to C$ in \D\ occurs as a display map for the associated representable natural transformation $\pi : \D_1\to\D_0$ of Proposition \ref{prop:Dtorepnattrans}, in the form: 
\[
\xymatrix{
\ext{C}{D} \ar[d]_{p_D} \pbcorner \ar[r]^{\identity} &  D\ar[d]^{d}\\
C \ar[r]_{\identity}   & C\,.}
\]
\end{proof}

\begin{remark}
Note that by specifying $\pi : \D_1\to\D_0$ as $\pi = \coprod_{d\in\D}\y{d}$, as mentioned in Remark \ref{remark:Dalternate}, we can obtain a simplification of Proposition \ref{prop:Dtorepnattrans} and its corollary: the representability of $\pi$ follows immediately from the indecomposibility of representable functors and the good behavior of coproducts of presheaves. The corollary then also follows more directly.
But also note that different representable natural transformations on a category $\C$ may give rise to the same stable class of maps $\D\subseteq\C_1$.  We shall not pursue this line of inquiry further, since it is not required for what follows.
\end{remark}

Our task now is to determine conditions on a stable class of maps \D\ that will ensure that the associated representable natural transformation $\pi : \D_1 \to \D_0$ models the various type-theoretic rules in the sense  determined in section \ref{sec:modelling}.

\subsection{Sums and Products}

Recall from proposition \ref{prop:sum} the condition on $\pi : \D_1 \to \D_0$ required to model the rules for sum types $\Sigma$:
there should be maps  
\begin{align*}
\pairmap :& \sum_{(A\ty\D_0)}\sum_{(B\ty\D_0^{\cors{A}})}\sum_{(a : \cors{A})}B(a) \to \D_1\\
\Sigma :& \sum_{A\ty\D_0}\D_0^{\cors{A}} \to \D_0\label{eq:sumop2}
\end{align*}
making the following diagram a pullback,
\begin{equation}\label{diag:sigmapb}
\xymatrix{
 \sum_{(A\ty\D_0)}\sum_{(B\ty\D_0^{\cors{A}})}\sum_{(a : \cors{A})}B(a) \ar[d]_{\pi_1} \ar[rr]^-{\pairmap} &&  \D_1 \ar[d]^{\pi}\\
\sum_{A\ty\D_0}\D_0^{\cors{A}} \ar[rr]_-{\Sigma} && \D_0 }
\end{equation}
where $\cors{A}$ denotes the fiber of $\pi$ over $A$, i.e.\ the object given by pullback:
\begin{equation*}
\xymatrix{
\cors{A} \ar[d] \ar[r] \pbcorner &  \D_1 \ar[d]^{\pi}\\
X \ar[r]_A & \D_0. }
\end{equation*}

Take any $X\in\C$ and $(A,B) : X \to \sum_{A\ty\D_0}\D_0^{\cors{A}}$, and we seek an assignment of a map $\Sigma(A,B) : X \to \D_0$, in a way that is natural in $X$.  

Using Lemma \ref{lem:polyclass}, the map $(A,B) : X \to \sum_{A\ty\D_0}\D_0^{\cors{A}}$ uniquely determines maps $A : X \to \D_0$ and $B : \cors{A} = X\times_{\D_0} \D_1 \to \D_0$, as already suggested by the notation. These in turn correspond uniquely (by Yoneda) to cospans:
\begin{align*}
A\ &= (a\in\C, p\in\D) \\
B\ &= (b\in\C, q\in\D)
\end{align*}
as indicated in:
\begin{equation}\label{diag:wrongsigma}
\xymatrix{
& B_1 \ar[dd]^{q} &\\
&& \\
& B_0 &\\
X\times_{A_0} A_1 \ar[ru]_-{b} \ar[d]_{p_X} \ar[rr] && A_1 \ar[d]^{p}\\
X \ar[rr]_{a} && A_0.}
\end{equation}
Here we have used the following easily proved fact, which we record for later reuse:

\begin{lemma}\label{lem:reindex}
When $A : X\to\D_0$ corresponds to the cospan $(a,p)$, then $A$ factors through $a : X \to A_0$ via the map $(\identity_{A_0}, p) : A_0\to\D_0$, and the following is then a pullback:
\[
\xymatrix{
A_1 \ar[d]_{p} \ar[rr] \pbcorner &&  \D_1 \ar[d]^{\pi}\\
A_0 \ar[rr]_{(\identity,\,p)} && \D_0 .}
\]
Thus
\[
\cors{A}\ =\ X\times_{\D_0} \D_1\ =\ X\times_{A_0} A_1\,.
\]
\end{lemma}

Returning to diagram \eqref{diag:wrongsigma}, it might now be expected that the sum $\Sigma(A,B)$ would be built by first pulling $q$ back along $b$ to give $q_X$, and then composing with $p_X$:

\begin{equation}\label{diag:wrongsigma2}
\xymatrix{
& B_1 \ar[dd]^{q} &\\
\cdot \ar@/_8ex /[ddd]_{\Sigma(A,B)?} \ar[ru] \ar[dd]^{q_X} && \\
& B_0 &\\
X\times_{A_0} A_1 \ar[ru]_-{b} \ar[d]^{p_X} \ar[rr] && A_1 \ar[d]^{p}\\
X \ar[rr]_{a} && A_0.}
\end{equation}
This is ``morally" what we want to do, since the resulting composite is indeed the display map  $\btexdot{X}{\Sigma(A,B)}\to X$, and so the requirement that  \emph{$\D$ is closed under composition} suggests itself.  There is a problem with this construction, however: $\Sigma(A,B)$ must be a cospan $(c\in\C,d\in\D)$ of the form:  $\xymatrix{& \cdot \ar[d]^{d} \\
				X \ar[r]_c & \cdot}$
But the only candidate in sight for $c$ is the identity on $X$, and that assignment would not be natural in $X$!  

Instead, we shall use a construction similar to that applied in section \ref{sec:modelling} to devise a ``generic case" in which to perform the operation of pullback-plus-composition, so that all other cases result simply from mapping into the generic one.  This construction, however, requires that \D\  not only be closed under composition, but also that certain \emph{right} adjoints to pullback exist.  To state the required condition precisely, for any object $C\in\C$, let us write $\D(C)$ for the full subcategory $\D(C)\hook\C/C$ on the \D-maps into $C$ as objects.  

\begin{definition}\label{def:closed}
A stable class of maps $\D\subseteq\C_1$ is \emph{closed} if the following conditions hold:
\begin{enumerate}

\item \C\ has a terminal object $1$, and every map $C\to 1$ is in \D.

\item \D\ is closed under composition.

\item  For any $d : D\to C$ in \D, the pullback functor $d^* : \D(C) \to \D(D)$ has a right adjoint $d_* : \D(D) \to \D(C)$, and the inclusion functor $\D(C)\hook\C/C$ preserves exponentials.
\end{enumerate}
\end{definition}

\begin{proposition}\label{prop:sumprod}
If $\D\subseteq\C_1$ is a closed, stable class of maps, then the associated representable natural transformation $\pi:\D_1\to\D_0$ models the rules for sums $\Sigma$ and products $\Pi$.
\end{proposition}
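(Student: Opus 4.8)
The plan is to reduce the two type-formers $\Sigma$ and $\Pi$ to constructing the required classifying maps by means of the ``generic case'' technique already used in section \ref{sec:modelling}, and then to verify that closedness of $\D$ supplies exactly the data those constructions need. Recall from Propositions \ref{prop:sum} and \ref{prop:prod} that modelling sums amounts to producing maps $\pairmap$ and $\Sigma$ making \eqref{diag:sigmapb} a pullback, while modelling products amounts to producing $\lambda$ and $\Pi$ making the corresponding square (the analogue of \eqref{diag:prod}) a pullback. In both cases the \emph{base} map to be defined, $\Sigma$ or $\Pi$, is a natural assignment sending $(A,B) : X \to \sum_{A\ty\D_0}\D_0^{\cors{A}}$ to a $\D$-cospan over $X$; by Yoneda and naturality it suffices to define each operation on the generic object $X = \sum_{A\ty\D_0}\D_0^{\cors{A}}$ with $(A,B)$ the identity, and then obtain every instance by reindexing.

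First I would set up the generic configuration. By Lemma \ref{lem:polyclass} and Lemma \ref{lem:reindex}, the generic pair $(A,B)$ unfolds into a diagram of the shape of \eqref{diag:wrongsigma}: a display $p : A_1 \to A_0$ in $\D$ together with a second display $q : B_1 \to B_0$ in $\D$, pulled back over $\cors{A} = X\times_{A_0}A_1$. For $\Sigma$ I would form, in this generic case, the composite display as in \eqref{diag:wrongsigma2}: pull $q$ back along $b$ to obtain $q_X \in \D$ (stability), then compose with $p_X$. Closure condition \eqref{2} of Definition \ref{def:closed} guarantees this composite lies in $\D$, and it is the display map of the intended $\sum_A B$; the point of working generically is that the required cospan in $\D_0$ now presents itself canonically over the generic base rather than forcing an unnatural identity component over an arbitrary $X$. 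The map $\pairmap$ into $\D_1$ is then read off from the universal property of the composite pullback, exactly paralleling how $u_2$ was obtained in \eqref{diag:polyclassnat}; and that \eqref{diag:sigmapb} is a pullback follows from pasting the two pullback squares whose composite defines $\Sigma(A,B)$, so verifying the square is, as in Proposition \ref{prop:sum}, a matter of unwinding definitions.

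For $\Pi$ the same generic setup applies, but now the composite-of-displays is replaced by the pushforward: condition \eqref{3} of Definition \ref{def:closed} provides, for the display $d : D\to C$ in $\D$, a right adjoint $d_* : \D(D)\to\D(C)$ to pullback, and the requirement that $\D(C)\hook\C/C$ \emph{preserve exponentials} ensures that $\prod_A B$ computed inside $\D$ agrees with the polynomial $P_p(-)$ computed in $\widehat{\C}$, so that the resulting map $\Pi : \sum_{A\ty\D_0}\D_0^{\cors{A}}\to\D_0$ is the one demanded by \eqref{diag:prod}. Condition \eqref{1} handles the terminal object and closed types, reconciling the generic construction with the empty context. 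I would then define $\lambda$ by transposing across the adjunction $d^*\dashv d_*$, and check the pullback square \eqref{diag:prod} by the adjunction's unit/counit together with the exponential-preservation clause.

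The main obstacle I expect is \textbf{naturality}, i.e.\ checking that the generically-defined $\Sigma,\pairmap,\Pi,\lambda$ commute strictly with reindexing along arbitrary $s : X'\to X$, in the sense of Remark \ref{remark:substitution}. Pullback-composition and pushforward are a priori only \emph{pseudo}-functorial in the base --- this is precisely the coherence issue flagged in the remark on strictification --- so the delicate part is to confirm that passing through the strictified presheaves $\D_1,\D_0$ (rather than the pseudofunctor $\D:\C\op\to\Cat$) absorbs the coherence isomorphisms and renders substitution strict. Concretely, one must verify that the canonical comparison maps arising from stability of $\D$ are identities on the nose once everything is expressed via the fixed chosen pullbacks defining the display maps, so that the equations $(\prod_A B)\sigma = \prod_{A\sigma}B\sigma$ and their $\Sigma$-analogues hold as equalities of elements of $\D_0$, not merely up to isomorphism. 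The remaining verifications---that the squares are pullbacks and that $\pairmap,\lambda$ satisfy the introduction/elimination equations---are routine diagram chases of the kind already carried out in Propositions \ref{prop:sum} and \ref{prop:prod}.
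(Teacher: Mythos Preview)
There is a genuine gap in your argument for $\Sigma$. You propose to build $\Sigma(A,B)$ by the construction of \eqref{diag:wrongsigma2}: pull $q$ back along $b$ and compose with $p_X$. But this is precisely the construction the paper sets up and then \emph{rejects}. An element of $\D_0(X)$ is a cospan $(c:X\to C,\ d:D\to C)$ with $d\in\D$, and the presheaf action is by precomposition in the \emph{first} factor only; the $\D$-component is held fixed under reindexing. The composite $p_X\circ q_X$ you form has codomain $X$ itself, so the only available cospan is $(\identity_X,\ p_X\circ q_X)$. Under $s:X'\to X$ this becomes $(s,\ p_X\circ q_X)$, whereas re-running the construction at $X'$ yields $(\identity_{X'},\ p_{X'}\circ q_{X'})$, a different cospan with a different $\D$-component. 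Your appeal to ``working generically at $X=\sum_{A:\D_0}\D_0^{\cors{A}}$'' does not repair this: that object is a non-representable presheaf, so there is no element of $\C$ on which to perform pull-back-and-compose, and the strictification built into $\D_0,\D_1$ does not absorb this particular discrepancy. You correctly flag naturality as the main obstacle, but the hope that ``the canonical comparison maps\ldots are identities on the nose'' is not borne out here.

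The missing idea is a \emph{local-universe} construction carried out inside $\C$. For fixed $\D$-components $p:A_1\to A_0$ and $q:B_1\to B_0$ one transposes $b:X\times_{A_0}A_1\to B_0$ across the exponential adjunction in $\C/A_0$ to obtain $\overline{b}:X\to B_0^{A_1}$, and then performs the pull-back-and-compose \emph{once}, over the base $B_0^{A_1}$, to get a single map $p'\circ q':G\to B_0^{A_1}$ in $\D$. The correct assignment is $\Sigma(A,B)=(\overline{b},\ p'\circ q')$: its $\D$-component depends only on $(p,q)$, not on $X$, and naturality in $X$ reduces to naturality of exponential transposition. This is why condition~(3) of Definition~\ref{def:closed} is needed for $\Sigma$ as well as for $\Pi$ --- not only to supply the pushforward $d_*$, but already to form the exponential $B_0^{A_1}$ serving as the generic base. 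Your treatment of $\Pi$ has the same defect; once the local universe is in place, one sets $\Pi(A,B)=(\overline{b},\ p'_*q')$ and the remaining verifications proceed roughly as you sketch.
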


\begin{proof}
Taking up the argument from diagram \eqref{diag:wrongsigma}, consider the following construction:  
\begin{equation}\label{genericcase}
\xymatrix@=3em{
& B_1 \ar[dd] |\hole ^>>>>>>>{q} & &\\
\cdot \ar[ru] \ar[dd]_{q_X} \ar[rr] && \ar[lu] G \ar[dd]^{q'} & \\
& B_0 & &\\
X\times_{A_0} A_1\ar[ru]^{b} \ar[d]_{p_X}  \ar[rr]_{\overline{b}\times_{A_0}A_1} && \ar[lu]_{\mathrm{ev}} B_{0}^{A_1} \times_{A_0} A_1 \ar[d]^{p'} \ar[r] & A_1 \ar[d]^{p}\\
X \ar[rr]^{\overline{b}} \ar@/_5ex /[rrr]_{a} && B_0^{A_1} \ar[r]  & A_0.}
\end{equation}
We first factor the map $a: X\to A_0$ through the transpose $\overline{b} : X \to B_{0}^{A_1}$ of $b : X\times_{A_0} A_1\to B_0$ over $A_0$ (regarding $B_0$ as a constant object over $A_0$ by base change along $A_0 \to 1$).  Here we know that $B_{0}^{A_1}$ exists in $\D(A_0)$ because both $p$ and $B_0\to 1$ are  in \D, and we know that $B_{0}^{A_1}$ is also an exponential in $\C/A_0$ by the definition of ``closed".

Pulling $p$ back along $a$ in two stages gives the two lower pullback squares.  Next, still working over $A_0$, the map $b$ now factors as $\mathrm{ev}\circ(\overline{b}\times_{A_0}A_1)$ by the exponential adjunction. 
The pullback $q_X$ of $q$ along $b$ can therefore also be constructed in two stages, giving first the map $q' : G \to B_{0}^{A_1} \times_{A_0} A_1$ as the pullback of $q$ along the evaluation $\mathrm{ev}$. 

The generic case of the  ``pullback and compose" construction \eqref{diag:wrongsigma2} that we seek now has the form:
\[
\xymatrix{
B_1 \ar[d] ^{q} & &\\
B_0 &  \ar[lu]  \ar@/_8ex/ [dd] G \ar[d]^{q'} & \\
& \ar[lu] |<<<<<<\hole B_{0}^{A_1} \times_{A_0} A_1 \ar[d]^{p'} \ar[r] & A_1 \ar[d]^{p}\\
& B_0^{A_1} \ar[r]  & A_0.}
\]
The composite $p'\circ q' : G \to B_0^{A_1}$ is the $\D$ component of the desired cospan $(\overline{b},\, p'\circ q')$ defining $\Sigma(A,B) : X\to \D_0$.  Observe that the pullback of $p'\circ q'$ along $\overline{b}$ is indeed $p_X\circ q_X$, and that the same is true for any given $y : Y\to B_0^{A_1}$, because any such map is uniquely of the form $y=\overline{c} : Y\to B_0^{A_1}$ for $c = \mathrm{ev}\circ(y\times_{A_0}A_1) : Y\times_{A_0} A_1 \to B_0$.  

This defines the natural transformation $\Sigma : \sum_{A\ty\D_0}\D_0^{\cors{A}} \to \D_0$.  
Explicitly, given $(A, B): X \to \sum_{A\ty\D_0}\D_0^{\cors{A}}$, where  $A = (a, p)$ and $B = (b, q)$, we define $\Sigma(A,B) : X\to \D_0$ by $\Sigma(A,B) = (\overline{b},\, p'\circ q')$.  This assignment is natural in $X$, for given any $s: Y\to X$, we have 
$$\Sigma(A,B)s = (\overline{b}\circ s,\, p'\circ q') = (\overline{b\circ s'},\, p'\circ q') = \Sigma(As,Bs),$$
because the \D-component is fixed, and exponential transposition is natural.
\[
\xymatrix@=3em{
&& B_1 \ar[d] ^{q} & &\\
&&B_0 & \ar[lu] G_1 \ar[d]^{q'} & \\
Y\times_{A_0} A_1\ar[rru]^{bs'} \ar[d]_{p_Y}  \ar[r]_{s'} & X\times_{A_0} A_1\ar[ru]_{b} \ar[d]_{p_X}  \ar[rr]_{\overline{b}\times_{A_0}A_1} && \ar[lu]_{\mathrm{ev}} B_{0}^{A_1} \times_{A_0} A_1 \ar[d]^{p'} \ar[r] & A_1 \ar[d]^{p}\\
Y \ar[r]^{s} \ar@/_5ex /[rrr]_{\overline{bs'}} & X \ar[rr]^{\overline{b}} && B^{A_1} \ar[r]  & A_0}
\]

To define the pairing map,
\[
\pairmap : \sum_{(A\ty\D_0)}\sum_{(B\ty\D_0^{\cors{A}})}\sum_{(a : \cors{A})}B(a) \to \D_1,
\]
take an element $(A,B,c): X\to \sum_{(A\ty\D_0)}\sum_{(B\ty\D_0^{\cors{A}})}\sum_{(a : \cors{A})}B(a)$, and we require an element  $\pairmap (A,B,c) : X\to \D_1$ via an assignment that is natural in $X$.  The map $(A,B,c)$ determines data of the form:
\begin{align*}
A\ &= (a\in\C, p\in\D) \\
B\ &= (b\in\C, q\in\D)\\
c &= (a',b'),
\end{align*}
where:
\begin{equation*}
\xymatrix{
& B_1 \ar[d]^{q} &\\
& B_0 &\\
X\times_{A_0} A_1 \ar@/^2ex /[ruu]^{b'} \ar[ru]_-{b} \ar[d]_{p_X} \ar[r] & A_1 \ar[d]^{p}\\
X \ar[r]_{a} \ar[ru]^{a'} & A_0.}
\end{equation*}

But this is just a section of the composite $p_X\circ q_X$,
\[
\xymatrix@=3em{
& B_1 \ar[dd] |\hole ^>>>>>>>{q} & &\\
\cdot \ar[ru] \ar[dd]_{q_X} \ar[rr] && \ar[lu] G \ar[dd]^{q'} & \\
& B_0 & &\\
X\times_{A_0} A_1\ar[ru]^{b} \ar[d]_{p_X}  \ar[rr]_{\overline{b}\times_{A_0}A_1} && \ar[lu]_{\mathrm{ev}} B_{0}^{A_1} \times_{A_0} A_1 \ar[d]^{p'} \ar[r] & A_1 \ar[d]^{p}\\
X \ar@/^8ex /[uuu]^{(a',b')} \ar[rr]^{\overline{b}} \ar@/_5ex /[rrr]_{a} && B_0^{A_1} \ar[r]  & A_0}
\]
or, equivalently, a section of the composite $p'\circ q'$ over $\overline{b}$.  But this in turn is exactly an element $c'$ of the generic $\Sigma$-type,
\[
\xymatrix@=3em{
&  G \ar[d]^{q'} & \\
& B_{0}^{A_1} \times_{A_0} A_1 \ar[d]^{p'} \ar[r] & A_1 \ar[d]^{p}\\
X \ar@/^2ex /[ruu]^{c'} \ar[r]_{\overline{b}} & B_0^{A_1} \ar[r]  & A_0.}
\]
So we can set $$\pairmap_X(A,B,c) = (c',p'\circ q')\in \D_1(X).$$
Again, this is plainly natural in $X$, because the action in the first component is precomposition and second component is fixed.

It is  immediate that this assignment makes \eqref{diag:sigmapb} a pullback: for fixed $A = (a, p)$ and  $B = (b, q)$, the correspondence $c=(a',b') \mapsto c'$ is clearly reversible.

For the products $\Pi$, we start from the object constructed in  \eqref{genericcase}:
\begin{equation*}
\xymatrix{
& B_1 \ar[d]_{q} & &\\
& B_0 &  \ar[lu] G \ar[d]^{q'} & \\
&& \ar[lu]_{\mathrm{ev}} B_{0}^{A_1} \times_{A_0} A_1 \ar[d]^{p'} \ar[r] & A_1 \ar[d]^{p}\\
&& B_0^{A_1} \ar[r]  & A_0.}
\end{equation*}
But now rather than composing $p'\circ q'$, we use the \emph{right} adjoint $p'_*$ to pullback along $p'$ to build the map $p'_*q' : G' \to B_0^{A_1} $:
\begin{equation}\label{genericcase2}
\xymatrix{
& B_1 \ar[d]_{q} & &\\
& B_0 &  \ar[lu] G \ar[d]^{q'} & \\
&G' \ar[rd]_{p'_*q'} & \ar[lu]_{\mathrm{ev}} B_{0}^{A_1} \times_{A_0} A_1 \ar[d]^{p'} \ar[r] & A_1 \ar[d]^{p}\\
&& B_0^{A_1} \ar[r]  & A_0.}
\end{equation}
Note that $p'_*q'$ (exists and) is in \D\ by our assumption that \D\ is closed.

Now, as in the previous case, given $(A,B) : X \to \sum_{A\ty\D_0}\D_0^{\cors{A}}$,  we have $A = (a\in\C, p\in\D)$ and $B = (b\in\C, q\in\D)$, from which we can construct $p'$, $q'$, and $\overline{b}: X\to B_0^{A_1}$. Then set:
\[
\Pi(A,B)\ =\ (\overline{b},\, p'_*q') : X \to \D_0\,.
\] 
The assignment is again obviously natural in $X$. 
The construction of  $\lambda$ and  verification that the resulting square is a pullback are entirely analogous to the case of $\pairmap$, and are omitted.

Finally, observe that for any $y : Y\to B_0^{A_1}$, the Beck-Chevalley conditions for the left and right adjoints to pullback $y^*$ give:
\begin{align*}
y^*(p'\circ q') &= (p_Y)\circ(q_Y)\\
y^*(p'_*q') &= (p_Y)_*(q_Y)\,.
\end{align*}
This ensures that the context extension operation behaves correctly.
\end{proof}

\subsection{Identity types}

As was the case for sums and products, in order to model intensional identity types $\Id$, we require an additional condition on the stable class of maps $\D\subseteq\C_1$.  It may be surprising that we also still need the class to be \emph{closed} in the sense of definition \ref{def:closed}; this is used to again construct certain ``generic" cases.  

Let $\D\subseteq\C_1$ be a class of maps in a category \C.
We shall say that a map $a : A \to B$ in \C\ is  \emph{anodyne} if it has the left lifting property with respect to all maps in $\D$.  The class $\D\subseteq\C_1$ will be called \emph{factorizing} if every map $f : A\to B$ in \C\ factors as $f = d\circ a$ with  $a$ anodyne and  $d\in\D$,
\[
\xymatrix{
& B' \ar[d]^{d}\\
A \ar[ru]^{a} \ar[r]_{f} & B.}
\]
%
%

\begin{lemma}\label{lemma:anodynestable}
If $\D \subseteq \C_1$ is a closed, stable, factorizing class of maps, then the anodyne maps are preserved by pullback along all maps in \D. Moreover, any pullback of an anodyne map between two \D-maps is again anodyne.
\end{lemma}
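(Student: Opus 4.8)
The plan is to prove the statement directly: given an anodyne map $a : A \to B$ and a map $d : C \to B$ in $\D$, I will show that the pullback $a' = d^{*}a : A' \to C$ of $a$ along $d$ is again anodyne. First note that this pullback exists and that pulling back along $d$ is a total functor $d^{*} : \C/B \to \C/C$: by Corollary \ref{cor:repnattransstableclass} the map $d$ is isomorphic to a display map, and display maps pull back along arbitrary maps by Remark \ref{remark:displaypb}. By definition of \emph{anodyne} it then suffices to show $a' \pitchfork g$ for every $g \in \D$.

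I would first reduce to test maps living over $C$. Given any lifting problem of $a'$ against some $g : X \to Y$ in $\D$, with bottom edge $v : C \to Y$, pull $g$ back along $v$ to obtain $h = v^{*}g : Z \to C$, which again lies in $\D$ by stability. The universal property of the pullback $Z = C \times_{Y} X$ rewrites the square as one with identity bottom: the given data become a single map $\tilde u : A' \to Z$ over $C$ (so $h\tilde u = a'$), and a diagonal filler of the original square is exactly a section $s$ of $h$ with $s a' = \tilde u$ (via $s = \langle \identity_{C}, w\rangle$ and $w = \pi_{X}\circ s$). Thus it is enough to show: for every $h : Z \to C$ in $\D$ and every $\tilde u : A' \to Z$ with $h\tilde u = a'$, there is a section of $h$ through $\tilde u$.

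The heart of the argument is a transposition across the adjunction $d^{*}\dashv d_{*}$. Since $h \in \D$, the pushforward $k = d_{*}h : W \to B$ is defined and, crucially, again lies in $\D$ because $\D$ is \emph{closed} (Definition \ref{def:closed}). Transposing $\tilde u : d^{*}\bar a \to \bar h$ gives a map $\hat u : \bar a \to d_{*}\bar h$ over $B$, i.e. $\hat u : A \to W$ with $k\hat u = a$; and since $\identity_{C} = d^{*}\identity_{B}$, sections of $h$ correspond under the same transposition to sections of $k$. Naturality of the correspondence in the domain turns the compatibility $s a' = \tilde u$ into $\hat s a = \hat u$. In this way the section problem for $h$ over $C$ is carried isomorphically onto the lifting problem, with identity bottom, of $a$ against the $\D$-map $k = d_{*}h$. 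But $a$ is anodyne, so it has the left lifting property against $k$; the resulting filler $\hat s$ transposes back to the required section $s$, and hence to the diagonal filler $w = \pi_{X}\circ s$ of the original square. As $g$ was arbitrary, $a'$ is anodyne.

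The main obstacle is precisely this transposition step, and it is here that closedness is indispensable: one needs $d_{*}$ to carry $\D$ into $\D$ (so that $k$ is a legitimate test map for the anodyne $a$), and one must check that $d_{*}h$ is a genuine right-adjoint value against the whole slice $\C/B$ — not merely against the $\D$-maps — so that the transposes $\hat u$ and $\hat s$ of maps out of the anodyne object $\bar a$ are well defined. This is where the agreement of the relative exponentials in $\D(C)$ with those of $\C/C$ (the third clause of closedness) does its work. By contrast, the factorizing hypothesis plays no role in this particular consequence, and the reductions surrounding the transposition are routine manipulations of pullbacks and of the naturality of adjoint transposition, which I would not belabor.
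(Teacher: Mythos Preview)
Your argument is correct and is precisely the standard Frobenius-type argument the paper alludes to; the paper gives no proof at all, merely flagging the lemma as ``standard''. The reduction to a section problem over $C$, followed by transposition across $d^{*}\dashv d_{*}$ to a lifting problem of the original anodyne $a$ against the $\D$-map $k=d_{*}h$, is exactly the reasoning one finds in the type-theoretic fibration-category literature (Gambino--Garner, Shulman, Joyal's tribes), and your remark that the factorizing hypothesis plays no role is also correct.

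The one point worth spelling out more than you do is the claim that the exponential-preservation clause makes $d_{*}h$ a genuine right-adjoint value against all of $\C/B$, not just $\D(B)$. A clean way to see this: that clause makes $(dh)^{d}$ and $d^{d}$ honest exponentials in $\C/B$, so for \emph{arbitrary} $R\to B$ (in particular $R=A$) one has $\text{Hom}_{\C/C}(d^{*}R,h)\cong\{\,f:R\to(dh)^{d}\mid h^{d}f=\text{canon}\,\}$. The section problem thus transposes, without any hypothesis on $a$, to a lifting problem of $a$ against $h^{d}:(dh)^{d}\to d^{d}$. One then checks $h^{d}\in\D$ by realizing it (over $d^{d}$) as $\pi_{*}(\ev^{*}h)$, where $\pi:d^{d}\times_{B}C\to d^{d}$ is the pullback of $d$ (hence in $\D$) and $\ev$ is evaluation; closedness gives $\pi_{*}(\ev^{*}h)\in\D(d^{d})$. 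Pulling $h^{d}$ back along the canonical section then recovers $d_{*}h$ with its full universal property in $\C/B$, which is what your transposition needs.
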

\begin{proof}
This is a familiar fact in axiomatic homotopy theory (also cf.~proposition 14 of \cite{GG}).  Briefly, let $d : D \to C$ in \D\ and $a :A\to C$ anodyne, and consider $d^*a : d^*A \to D$. This is also anodyne if it lifts against any $b : B \to D$ in \D\ (using the fact that \D-maps are preserved under base change).  Applying $d_*$ to $b$ gives a corresponding lifting problem at $C$ involving $a$ and $d_*b$, which has a solution since $a$ is anodyne and $d_*b$ is in \D.  Transposing the lift across the adjunction $d^* \dashv d_*$ gives the solution over $D$.

For the second statement, suppose given $f : A \to Y$ and $g : B \to Y$ in \D\ and anodyne $a : A \to B$ over $Y$. Pull back along any $h : X \to Y$ to get $f' : A' \to X$ and $g' : B' \to X$ in \D\ with $a' : A' \to B'$ over $X$, which we claim is also anodyne. \[ \xymatrix{ A' \ar[dd]_{f'} \ar[rd]^{a'} \ar[rrr]^{f^*h} &&& A \ar[dd]_(.3){f} \ar[rd]^{a} \\ & B' \ar[ld]^{g'} \ar[rrr]_(.3){g^*h} &&& B \ar[ld]^{g} & \\ X \ar[rrr]_{h} &&& Y .} \] Since anodyne maps are preserved by pullback along \D-maps, it suffices to assume that $h$ is anodyne (else factor it into an anodyne followed by a \D-map). The pullback $f^*h : A' \to A$ of $h$ along $f$ is then anodyne, and so is $g^*h : B' \to B$. Since $a \circ f^*h = g^*h \circ a'$, and anodyne maps are closed under composition, we shall have $a'$ anodyne once we prove the following: Given any maps $C \stackrel{i}{\to} D \stackrel{j}{\to} E$, if both $j$ and $j \circ i$ are anodyne, so is $i$. \[ \xymatrix{ C \ar[d]_{i} \ar[rr]^{c} && F \ar[d]^{f} \\ D \ar[d]_{j} \ar[rr]_(.6){1_D} \ar@{.>}[rru]^d && D \\ E \ar@{.>}[rru]_{e} \ar@{.>}[rruu]^(.3){k} &&.} \] To prove this, we take any $f : F \to D$ in \D\ and $c : C \to F$ with $f\circ c = i$, and produce a diagonal filler $d : D \to F$, with $d\circ i = c$ and $f\circ d = 1_D$. Since $j$ is anodyne and $D$ is fibrant (all objects in \C\ are fibrant by the definition of ``closed''), there is an $e : E \to D$ with $e\circ j = 1_D$. Since $j\circ i$ is anodyne and $f$ in \D\ there is a $k : E \to F$ with $k\circ j\circ i = c$ and $f\circ k = e$. Let $d = k\circ j$. Then $d\circ i = k\circ j\circ i = c$, and $f\circ d = f\circ k\circ j = e\circ j = 1_D$, as required.
\end{proof}

\begin{proposition}\label{prop:id}
If $\D \subseteq \C_1$ is a closed,  stable, factorizing class of maps, then the associated representable natural transformation $\pi:\D_1\to\D_0$ models the rules for intensional identity types $\Id$.
\end{proposition}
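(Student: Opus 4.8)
By Proposition~\ref{prop:intid}, modelling intensional identity types amounts to exhibiting maps $\iy : \D_1 \to \D_1$ and $\Id : \D_1 \times_{\D_0} \D_1 \to \D_0$ with $\pi\circ\iy = \Id\circ\delta$, such that the canonical map $\rho = (\delta,\iy) : \D_1 \to I$ (where $I$ is the pullback of $\pi$ along $\Id$) has a left-lifting structure $j$ with respect to $\pi$, both regarded as maps over $\D_0$. The strategy mirrors that of Proposition~\ref{prop:sumprod}: rather than verifying the lifting condition context-by-context, I would construct a \emph{single} generic instance of the factorization-and-filling problem and let naturality be inherited from the functoriality of the $\D_0,\D_1$ construction. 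So the proof has two clearly separated parts: (i) build $\Id$ and $\iy$ from the factorization structure on $\D$, and (ii) produce the left-lifting structure $j$ using that $\D$ is closed.

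\emph{Constructing $\Id$ and $\iy$.}
First I would recall that a point of $\D_1 \times_{\D_0} \D_1$ over an object $X$ is a $\D$-map $d_A : A_1 \to A_0$ with base $a : X \to A_0$ together with two sections $a,b$ of its pullback along that base --- equivalently, by the analysis following diagram~\eqref{diag:wrongsigma}, two terms of a common type. To define $\id{A}(a,b)$ generically, I would take the diagonal $\delta_A$ of a display map and apply the \emph{factorizing} hypothesis: factor the diagonal of the generic display map as an anodyne map followed by a $\D$-map. The $\D$-map component gives the generic identity type, hence $\Id$, and the anodyne component supplies the reflexivity section $\iy$; by construction $\rho = (\delta,\iy)$ is exactly this anodyne map, so it is anodyne, i.e.\ it has the left lifting property against every map in $\D$. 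This is the conceptual heart of why the factorizing axiom is the right hypothesis: reflexivity is literally the anodyne half of a factorization of the diagonal.

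\emph{The left-lifting structure and its naturality.}
Having arranged that $\rho$ is anodyne, a single diagonal filler against $\pi$ exists for each square; the real work --- and the main obstacle --- is to make the choice of fillers \emph{natural in the context} $X$, as required by~\eqref{eq:BCforJ}, since unlike the $\Sigma,\Pi$ cases there is no uniqueness to fall back on. Here I would deploy the generic-case method once more. Working over $\D_0$ and using Lemma~\ref{lem:extintLLP}, the desired left-lifting structure $j$ is equivalent to a family of fillers $j(a,b)$, natural in $X$, for squares over $\D_0$; I would produce these by solving the lifting problem \emph{once} in the generic situation (the universal display map together with its factored diagonal) and then transport the solution to an arbitrary $X$ by pulling back along the classifying map, exactly as in~\eqref{genericcase}. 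The closedness of $\D$ is what guarantees the generic object and the relevant exponentials exist, so that the universal filler can be formed in the first place. The step I expect to be delicate is checking that this pulled-back filler is stable under further substitution $s : Y \to X$ --- that is, verifying the Beck--Chevalley-type compatibility that yields $j(a,b)\circ(s\times\cdots) = j(a\circ\cdots, b\circ\cdots)$; but since the filler is defined by pullback from a fixed generic datum and $\D$ is stable (with anodynes preserved by pullback along $\D$-maps by Lemma~\ref{lemma:anodynestable}), this naturality should follow formally, and I would only sketch it, leaving the routine diagram chase to the reader as in the preceding propositions.
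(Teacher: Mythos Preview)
Your high-level strategy matches the paper's: define $\Id$ and $\iy$ from anodyne--$\D$ factorizations of diagonals, then manufacture a generic instance of the elimination problem so that naturality is automatic. But two points need correction, the second of which is a genuine gap.

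First, a minor category error. The map $\rho=(\delta,\iy):\D_1\to I$ lives in presheaves, so calling it ``anodyne'' is not meaningful. What is actually done (and what you presumably intend) is to choose, for \emph{each} $\D$-map $p:A_1\to A_0$, a factorization $\delta_A = d_A\circ r_A$ with $r_A$ anodyne and $d_A\in\D$; there is no single ``generic display map'' in $\C$ whose diagonal one factors once and for all. The presheaf map $\rho$ is then assembled from these choices, and its pullback to any representable $X\to\D_0$ is $X\times_{A_0} r_A$, which is anodyne by Lemma~\ref{lemma:anodynestable}. That suffices for fillers to \emph{exist}, but not for them to be chosen naturally.

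Second, and this is where your sketch falls short: your ``generic situation'' is the wrong object. You describe it as ``the universal display map together with its factored diagonal,'' but that supplies only the \emph{left} column $\rho_A$ of the lifting square~\eqref{diag:Idfill}; the filler must also be natural in the \emph{right} column, i.e.\ in the target type $C$ and term $c$. The paper's move is to observe that, in the $\D_0,\D_1$ formalism, substitution acts only on the $\C$-component of a cospan while the $\D$-component stays fixed; so one may fix $p:A_1\to A_0$ and $q:B_1\to B_0$ and then build, in $\C/A_0$, the object
\[
G\ =\ [I_A,B_0]\times_{[A_1,B_0]}[A_1,B_1]
\]
which classifies commutative squares from $r_A$ to $q$. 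This needs its own lemma, and it is here (not merely in forming $I_A$) that closedness of $\D$ does the real work, since those exponentials must exist in $\D(A_0)$ and agree with exponentials in $\C/A_0$. One then solves the lifting problem once with $G\times_{A_0} r_A$ on the left (anodyne by Lemma~\ref{lemma:anodynestable}) and obtains every other filler by precomposing with the classifying map $X\to G$. This is \emph{not} ``exactly as in~\eqref{genericcase}'': there the generic object $B_0^{A_1}$ classifies only the type $B$, whereas here $G$ must classify the entire elimination premise $(C,c)$. Without identifying this $G$ your proposal does not yield a well-defined, natural $j$.
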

\begin{proof}
Recall from proposition \ref{prop:intid} that we require maps  
\begin{align}
\iy &: \D_1 \to \D_1\\
\Id &: \D_1\times_{\D_0} \D_1 \to \D_0
\end{align}
commuting with $\pi$ and its diagonal $\delta$, 
\begin{equation*}
\xymatrix{
\D_1 \ar@/{}_{1pc}/[ddr]_{\delta} \ar@{.>}[dr]|-{(\delta,\,\iy)} \ar@/{}^{1pc}/[drr]^{\iy} &&\\
& I \ar[r]  \ar[d]  \pbcorner & \D_1 \ar[d]^{\pi} \\
& \D_1\times_{\D_0} \D_1 \ar[r]_-{\Id} & \D_0
}
\end{equation*}
and a left-lifting structure $j$ for the map $(\delta,\iy)$ with respect to $\pi$,
$$(\delta,\iy)\ \pitchfork_j\ \pi$$
where both are regarded as maps over $\D_0$.

Again, we shall write $\rho = (\delta,\iy) : \D_1 \to I$.

We begin by constructing the map 
\[
\Id : \D_1\times_{\D_0} \D_1 \to  \D_0.
\]
For each $A\in\C$, pick a factorization of the diagonal,
\[
\xymatrix{
& I_A \ar[d]^{d_A}\\
A \ar[ru]^{r_A} \ar[r]_-{\delta_A} & A\times A}
\]
with $r_A$ anodyne and $d_A\in\D$, and do the same for every map $A: A_1 \to A_0$ in \D,
\[
\xymatrix{
& I_A \ar[d]^{d_A}\\
A_1 \ar[ru]^{r_A} \ar[r]_-{\delta_A} & A_1\times_{A_0} A_1.}.
\]
(Of course, the second step subsumes the first.)

For $X\in\C$, a map $\alpha: X\to  \D_1\times_{\D_0} \D_1$ consists of a map $A : X \to {\D_0}$ together with two maps $a_1,a_2 : X\to \D_1$ over ${\D_0}$.  Now $A$ is a cospan $A = (a\in \C,p\in\D)$, and there is a  pullback diagram,
\[
\xymatrix{
&& A_1 \ar[d]^{p} \ar[r] \pbcorner & \D_1 \ar[d]^{\pi}\\
X  \ar@<1ex>[urr]^{a_1} \ar[urr]_{a_2}  \ar[rr]_{a} \ar@/_4ex /[rrr]_{A} && A_0 \ar[r] & \D_0}
\]
with the corresponding elements $a_1, a_2$ fitting in as shown.  These in turn determine an element $(a_1,a_2)$ of $A_1\times_{A_0} A_1$, which we could also have constructed directly, as indicated in the following:
\[
\xymatrix{
X  \ar@/^6ex /[rrr]^{\alpha} \ar[rr]_-{(a_1,\, a_2)} \ar@/_2ex/[rrdd]_{a} && \ar@<-1ex>[d] \ar@<.5ex>[d]  A_1\times_{A_0} A_1 \ar[r] & \ar@<-1ex>[d] \ar@<.5ex>[d]  \D_1\times_{\D_0} \D_1\\
&& A_1 \ar[d]^{p} \ar[r] \pbcorner & \D_1 \ar[d]^{\pi} \\
&& A_0 \ar[r] & \D_0 .}
\]
We require an element $\Id(\alpha) : X\to \D_0$, by an assignment that is natural in $X$.  For this, we take the following cospan:
\[
\xymatrix{
&& I_A \ar[d]^{d_A}\\
X  \ar[rr]_-{(a_1,\, a_2)} && A_1\times_{A_0} A_1 .}
\]

%


To define $\iy : \D_1\to \D_1$, an element of $\D_1(X)$ has the form $(a,p)$ with $a: X \to A_1$ and $p: A_1\to A_0$ with $p\in\D$.  Compose with $r_A : A_1 \to I_A$ to get 
\[
\iy(a,p)\ =\ (r_A\circ a, d_A),
\]
which is again an element of $\D_1(X)$:
\begin{equation}\label{diag:rho}
\xymatrix{
&& A_1 \ar[d]^{r_A} \\
X  \ar[rru]^{a} \ar[rr]_{r_A\circ\,a} \ar@/_2ex/[rrd]_{(a,a)} && I_A \ar[d]^{d_A}\\
&& A_1\times_{A_0} A_1.}
\end{equation}
This specification plainly makes $\pi\circ\iy(a,p)=\Id\circ\delta(a,p)$, as required.

Next, the presheaf $I:\C\op\to \Set$ has as elements of $I(X)$ pairs $$\alpha: X\to  \D_1\times_{\D_0} \D_1,\quad \beta : X\to \D_1$$ fitting together as follows:
\[
\xymatrix{
&& I_A \ar[d]^{d_A}\\
X \ar@/^2ex /[urr]^{b} \ar[rr]_-{(a_1,a_2)} \ar@/_2ex /[rrdd]_{a} && \ar@<-1ex>[d] \ar@<.5ex>[d]  A_1\times_{A_0} A_1\\
&& A_1 \ar[d]^{p}\\
&& A_0 }
\]
where $\alpha = (a,p)$ and $\beta=(b,d_A)$. 
The maps $\pi_1: I\to  \D_1\times_{\D_0} \D_1$ and $\pi_2:I\to \D_1$ are of course the projections.

Finally, the map $\rho = (\delta,\iy) : \D_1 \to I$ takes $(a,p): X\to\D_1$ with $a: X \to A_1$ and $p: A_1\to A_0$ to the pair:
\[
\rho(a,p) = (\delta(A,p), \iy(a,p)),
\]
which is indeed in $I(X)$ by diagram \eqref{diag:rho}.

Now by lemma \ref{lem:extintLLP}, a left-lifting structure $j$ for $\rho$ with respect to $\pi$ over $\D_0$ is equivalent to a natural (in $X$) choice of diagonal fillers $j(\alpha,\beta)$ for all squares over $\D_0$ of the form
\begin{equation}\label{jnat}
\xymatrix{
X\times_{\D_0} \D_1 \ar[d]_{X\times_{\D_0}{\rho}} \ar[rr]^{\alpha} && {\D_0}^*\D_1 \ar[d]^{{\D_0}^*\pi} \\
X\times_{\D_0} I \ar[rr]_{\beta} \ar@{.>}[urr]|-{\ j(\alpha,\beta)\ } && {\D_0}^*{\D_0}
}
\end{equation}
where ${\D_0}^* : \Chat \to \Chat/{\D_0}$ is the base change.  Let the object $X$ over ${\D_0}$ be $A : X \to {\D_0}$, with $X\in\C$ representable, which clearly suffices by naturality.  Using lemma \ref{lem:reindex}, there is a corresponding cospan $(a,p)$ and a double pullback diagram:
\[
\xymatrix{
X\times_{A_0}A_1 \ar[d]_{p_A} \ar[r] & A_1 \ar[d]^{p} \ar[r] \pbcorner & \D_1 \ar[d]^{\pi}\\
X \ar[r]_{a} \ar@/_4ex /[rr]_{A} & A_0 \ar[r] & \D_0 .}
\]
Thus $X\times_{\D_0} \D_1 = X\times_{A_0}A_1$ in diagram \eqref{jnat}.  Proceding similarly for the other expressions there, we have:
\begin{align*}
\dom(X\times_{\D_0} \D_1)\ &=\ X\times_{A_0}A_1 \\
\dom(X\times_{\D_0} I) \ &=\  X\times_{A_0} I_A  \\
\dom(X\times_{\D_0}{\rho})\ &=\ X\times_{A_0}r_A
\end{align*}
as displayed in the following diagram.
\begin{equation}\label{diag:big2}
\xymatrix@=3em{
 {X}\times_{A_0} A_1 \ar[d]_{{X\times_{A_0}r_A}} \ar[r]  & A_1 \ar[d]_{r_A} \ar[r] \pbcorner & \D_1 \ar[d]_{\rho} \ar[dr]^{\iy} & \\
 {X}\times_{A_0} I_A \ar[d] \ar[r]  & I_A \ar[d] \ar[r]	\pbcorner	& I \ar[d] \ar[r] 	\pbcorner				& \D_1 \ar[d]^{\pi} \\
 {{X}\times_{A_0} A_1\times_{A_0} A_1} \ar@<-1ex>[d] \ar@<.5ex>[d] \ar[r] & A_1\times_{A_0}A_1 \ar@<-1ex>[d] \ar@<.5ex>[d] 	 \ar[r]		& \D_1\times_{\D_0} \D_1 \ar[r]_-{\Id} \ar@<-1ex>[d] \ar@<.5ex>[d] 	& {\D_0} \\
 {X}\times_{A_0}{A_1} \ar[d]_{p_A} \ar[r] 	& A_1\ar[d]_{p}\ar[r]	\pbcorner	& \D_1 \ar[d]^{\pi} 						& \\
 {X} \ar[r]_a 			& A_0	\ar[r]					& {\D_0} .	&
}
\end{equation}
Transposing diagram \eqref{jnat} to forget the base ${\D_0}$,  we arrive at the equivalent filling problem
\[
\xymatrix{
X\times_{A_0}A_1 \ar[d]_{{X\times_{A_0}r_A}} \ar[rr]^{\alpha} && \D_1 \ar[d]^{\pi} \\
X\times_{A_0} I_A   \ar[rr]_{\beta}  \ar@{.>}[urr] && \D_0
}
\]
to be solved naturally in $X$.  Now $\beta$ is a cospan of the form:
\[
\xymatrix{
&& B_1 \ar[d]^{q} \\
X\times_{A_0} I_A  \ar[rr]_-{b} && B_0
}
\]
with $q\in\D$.  And $\alpha = (c, q)$ completes the square,
\begin{equation}\label{diag:pause}
\xymatrix{
X\times_{A_0}A_1 \ar[d]_{{X\times_{A_0}r_A}} \ar[rr]^-{c} && B_1 \ar[d]^{q} \\
X\times_{A_0} I_A  \ar[rr]_-{b} && B_0.
}
\end{equation}
By lemma \ref{lemma:anodynestable}, ${X\times_{A_0}r_A}$ is anodyne,  and $q$ is in \D\ by assumption, so there is a diagonal filler $j(c,b)$ for this case, but we need to make a systematic choice that will be natural in $X$.  In order to do this, we will again construct a generic case from which all others arise by mapping in.  For that, we require the following.

\begin{lemma}
Given maps $p:A_1\to A_0$ and $q:B_1\to B_0$ in a  locally cartesian closed category, there is an object $G$ with maps $e_1:G\times A_1 \to B_1$ and $e_0 : G\times A_0\to B_0$ such that $e_0 \circ (G\times p) = q\circ e_1$:
\[
\xymatrix{
G\times A_1 \ar[d]_{G\times p} \ar[r]^-{e_1} & B_1 \ar[d]^{q} \\
G\times A_0   \ar[r]_-{e_0} & B_0
}
\]
and such that, given any object $X$ with maps $f_1:X\times A_1 \to B_1$ and $f_0 : X\times A_0\to B_0$ such that $f_0 \circ (X\times p) = q\circ f_1$, there is a (unique) map $$f : X\to G$$ such that $f_i = e_i\circ(f\times A_i)$ for $i=0,1$:
\[
\xymatrix{
X\times A_1 \ar@/^4ex /[rrr]^{f_1} \ar[d]_{X\times p} \ar[rr]_{f\times A_1} && G\times A_1 \ar[d]_{G\times p} \ar[r]_-{e_1} & B_1 \ar[d]^{q} \\
X\times A_0   \ar@/_4ex /[rrr]_{f_0} \ar[rr]^{f\times A_0} && G\times A_0   \ar[r]^-{e_0} & B_0\,.
}
\]
In other words, $(G,e_0,e_1)$ is a universal object for the presheaf (in $X$) of commutative diagrams of the form  
\[
\xymatrix{
X\times A_1 \ar[d]_{X\times p} \ar[r]^-{f_1} & B_1 \ar[d]^{q} \\
X\times A_0   \ar[r]_-{f_0} & B_0.
}
\]
\end{lemma} 

\begin{proof}
Using in-line notation $[X,Y] = Y^X$, take $$G = [{A_0}, B_0]\times_{[{A_1},B_0]} [{A_1}, B_1]$$ where the pullback  is formed with respect to $p$ and $q$, as in \eqref{diag:intLLP}.
\begin{equation}\label{diag:required}
\xymatrix{
[{A_0}, B_0]\times_{[{A_1},B_0]} [{A_1}, B_1] \ar[d] \ar[r] & [{A_1}, B_1] \ar[d]^{[{A_1}, q]} \\
[{A_0}, B_0] \ar[r]_{[p, B_0]}  	& 	[{A_1},B_0]
}
\end{equation}
The maps $e_i$ for $i=1,2$ are defined by $e_i\ =\ \ev_i \circ (p_i\times A_i)$:
\[
\xymatrix{
\big( [{A_0}, B_0]\times_{[{A_1},B_0]} [{A_1}, B_1]\big) \times A_i \ar[d]_{p_i\times A_i} \ar@/^4ex /[rrd]^-{e_i}&&  \\
 [{A_i}, B_i] \times A_i 	\ar[rr]_{\ev_i} && 	B_i
}
\]
We have $e_0 \circ (G\times p) = (\ev_0 \circ (p_0\times A_0)) \circ (G\times p) 
= q\circ (\ev_1 \circ (p_1\times A_1))$
Verification of the construction is left to the reader.
\end{proof}

%
%

Returning to the proof of the proposition, we first restore the products on the left in diagram \eqref{diag:pause} by restoring the indexing over $A_0$ and moving $q : B_1\to B_0$ to $\C/A_0$ by base change along $A_0\to 1$ (but without explicitly writing $A_0^*(B_1)$, etc.).   We now want to apply the lemma to the case of the category $\C/A_0$, with $q: B_1\to B_0$ as named in the lemma and $r_A : A_1 \to I_A$ in place of $p: A_1 \to A_0$.  Although $\C/A_0$ is not locally cartesian closed, the objects $B_0$ and $B_1$ and the maps $p:A_1 \to A_0$ and $I_A\to A_0$ are all in \D, and so the required exponentials exists in $\D(A_0)$, and thus in $\C/A_0$.  Moreover, the required pullback \eqref{diag:required} exists because $q$ is in \D.

Applying the lemma to the filling problem in diagram \eqref{diag:pause}, we can therefore interpolate the universal case $(G, e_0, e_1)$ to obtain the following (where we have written $\times$ for $\times_{A_0}$):
\[
\xymatrix{
X\times A_1 \ar@/^4ex /[rrr]^{c} \ar[d]_{X\times r_A} \ar[rr]_{f\times A_1} &
					& G\times A_1 \ar[d]_{G\times r_A} \ar[r]_-{e_1} & B_1 \ar[d]^{q} \\
X\times I_A   \ar@/_4ex /[rrr]_{b} \ar[rr]^{f\times I_A} &
					& G\times I_A   \ar[r]^-{e_0} & B_0
}
\]
where $f : X \to G$ classifies $(X,b,c)$.

Now $G\times r_A$ is anodyne, since $r_A$ is, so we can find a diagonal filler 
$j(e_1,e_0): G\times A_0 \to B_1$ for  this generic case.  
\[
\xymatrix{
X\times A_1 \ar@/^4ex /[rrrr]^{c} \ar[d]_{X\times r_A} \ar[rr]_{f\times A_1} &
					& G\times A_1 \ar[d]_{G\times r_A} \ar[rr]_-{e_1} && B_1 \ar[d]^{q} \\
X\times I_A   \ar@/_4ex /[rrrr]_{b} \ar[rr]^{f\times I_A} &
					& G\times I_A   \ar[rr]^-{e_0} \ar@{.>}[rru]|-{j(e_1,e_0)} && B_0
}
\] 
Then for any  lifting problem of the form $(X, b, c)$ in \eqref{diag:pause}, we can  take as a filler $j(c,b) = j(e_1,e_0)\circ (f\times I_A)$ to have a  choice that is natural in $X$.  This provides the required left-lifting structure for $(\delta, \iy)$ with respect to $\pi$.
\end{proof}

\subsection{The main result}

Combining propositions \ref{prop:sumprod} and \ref{prop:id}, we have now reached our goal:

\begin{theorem}\label{thm:natmod}
Let  $\D$ be any closed,  stable, factorizing class of maps in a category \C.  There is  a representable natural transformation $\pi:\D_1\to\D_0$ over $\C$ that models dependent type theory with extensional sums $\Sigma$, extensional products $\Pi$, and intensional identity types $\Id$.
\end{theorem}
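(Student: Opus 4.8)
The plan is to derive this theorem as an immediate assembly of the three constructions already carried out, all applied to the single natural transformation $\pi:\D_1\to\D_0$ that the class $\D$ determines. The hypotheses of the theorem---that $\D$ is closed, stable, and factorizing---are precisely those demanded by the preceding propositions, so the entire argument reduces to checking that each proposition's hypotheses are in force and then invoking it for this one $\pi$.

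First I would note that stability of $\D$ is exactly the hypothesis of Proposition \ref{prop:Dtorepnattrans}, so that $\pi:\D_1\to\D_0$ is representable; by Section~1 this is then a natural model of type theory, with $\D_0$ as the presheaf of types, $\D_1$ as the presheaf of terms, and the display squares \eqref{diag:prep} realizing context extension. Next, since $\D$ is both closed and stable, Proposition \ref{prop:sumprod} applies to the same $\pi$ and yields the operations $\Sigma,\pairmap,\Pi,\lambda$ together with the required pullback square \eqref{diag:sigmapb} and its $\Pi$-analogue; by Propositions \ref{prop:sum} and \ref{prop:prod} these encode the formation, introduction, elimination, and computation rules for $\Sigma$ and $\Pi$. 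Finally, since $\D$ is closed, stable, and factorizing, Proposition \ref{prop:id} applies and supplies maps $\iy,\Id$ commuting with the diagonal $\delta$ together with a left-lifting structure $j$ for $(\delta,\iy)$ with respect to $\pi$ over $\D_0$; by Proposition \ref{prop:intid} this is exactly what is needed to model intensional identity types.

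Combining the three, the one representable transformation $\pi:\D_1\to\D_0$ carries all the structure $\Pi,\lambda,\Sigma,\pairmap,\Id,\iy$ (together with the lifting structure $j$) required to model dependent type theory with $\Sigma$, $\Pi$, and $\Id$, which is the assertion of the theorem. The only thing worth observing is that all three propositions speak of the same $\pi$ built from $\D$, so nothing needs to be renegotiated when they are combined; in particular the substitution-naturality of each operation was already secured in the respective proofs---via the Beck-Chevalley conditions in Proposition \ref{prop:sumprod} and via the uniform, mapping-in choice of fillers in Proposition \ref{prop:id}.

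I expect no genuine obstacle at this final stage: the real difficulty of the whole development was discharged earlier, above all in Proposition \ref{prop:id}, where the left-lifting structure had to be selected \emph{naturally} in the context $\G$ by interpolating a single universal filling problem $(G,e_0,e_1)$ from which all instances arise by mapping in. With that in hand, the present theorem is essentially a bookkeeping step recording that the three hypotheses jointly deliver $\Sigma$, $\Pi$, and $\Id$ at once.
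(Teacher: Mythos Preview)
Your proposal is correct and matches the paper's approach exactly: the paper's proof of this theorem is the single sentence ``Combining propositions \ref{prop:sumprod} and \ref{prop:id}, we have now reached our goal,'' and your write-up is simply a fuller unpacking of that assembly, additionally making explicit the appeal to Proposition \ref{prop:Dtorepnattrans} for representability. Your remarks about where the real work lies (the natural choice of fillers in Proposition \ref{prop:id}) are accurate and match the paper's development.
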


\begin{corollary}\label{cor:CwF}
Let  $\D$ be any closed,  stable, factorizing class of maps in a category \C.  There is a category-with-families model of dependent type theory, with extensional sums $\Sigma$, extensional products $\Pi$, and intensional identity types $\Id$, with the contexts and substitutions being the objects and morphisms of \C, and as types and terms in context $X$, a category equivalent to the \D-maps into $X$ and their sections.
\end{corollary}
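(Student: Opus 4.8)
The plan is to deduce this corollary directly from Theorem~\ref{thm:natmod} by translating the phrase ``representable natural transformation modelling $\Sigma,\Pi,\Id$'' into the language of categories with families. For a closed, stable, factorizing class $\D\subseteq\C_1$, Theorem~\ref{thm:natmod} already supplies a representable natural transformation $\pi:\D_1\to\D_0$ over $\C$ modelling all three type constructors. Since condition~(1) of Definition~\ref{def:closed} forces $\C$ to have a terminal object, the Proposition identifying representable maps of presheaves (over a base with a terminal object) with Dybjer's categories with families applies, and converts $(\C,\pi)$ into a category-with-families model; the structure for $\Sigma$, $\Pi$, and $\Id$ transports because it was established for precisely this $\pi$. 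The remaining task is purely to read off what the sorts of this category with families are in terms of $\D$.

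To do so I would unwind the four sorts. Contexts and substitutions are, by the very definition of $\D_0,\D_1$, the objects and arrows of $\C$ (with the empty context the chosen terminal object). A type in context $X$ is, by Yoneda, an element $A\in\D_0(X)$, that is, a cospan $X\xrightarrow{\,b\,}\cdot\xleftarrow{\,d\,}\cdot$ with $d\in\D$; pulling $d$ back along $b$ as in Proposition~\ref{prop:Dtorepnattrans} produces its display map $p_A:\ext{X}{A}\to X$, which lies in $\D$. Corollary~\ref{cor:repnattransstableclass} shows conversely that every $\D$-map into $X$ occurs as such a display map (take the cospan $(\identity_X,d)$), so $A\mapsto p_A$ lands, essentially surjectively, in the category $\D(X)$ of $\D$-maps into $X$. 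A term $\terms{X}{a:A}$ is an element of $\D_1(X)$ lying over $A$; the pair $(\identity_X,a)$ is a cone over the defining pullback square of $p_A$, whose mediating map is a section of $p_A$, and $q_A\circ(-)$ reverses this. Hence terms over $A$ are exactly the sections of $p_A$, giving the asserted description.

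The point requiring care --- and the reason the statement reads \emph{equivalent} rather than \emph{isomorphic} --- is that $A\mapsto p_A$ is not a bijection on objects: different cospans, and different choices of pullback, present the same display map only up to isomorphism. I would therefore fix the morphisms of the ``category of types in context $X$'' (the fiber over $X$ of the internal category of Remark~\ref{remark:catoftypes}) and verify that $A\mapsto p_A$ is full, faithful, and essentially surjective onto $\D(X)$, and that the resulting equivalences are compatible with reindexing along substitutions $\sigma:\Delta\to\G$ via Lemma~\ref{lem:reindex}. This is exactly the coherence content flagged earlier: $\pi:\D_1\to\D_0$ is the strictification of the pullback pseudofunctor $\D:\C\op\to\Cat$, and the equivalence $\D(-)\simeq(\text{fibers of }\pi)$ is precisely the comparison between a pseudofunctor and its strictification. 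I expect this bookkeeping, rather than any new mathematical difficulty, to be the main obstacle, since the substantive work is already discharged by Theorem~\ref{thm:natmod}.
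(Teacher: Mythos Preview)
Your proposal is correct and matches the paper's intent: the paper states this corollary without proof, treating it as immediate from Theorem~\ref{thm:natmod} together with the earlier Proposition identifying representable natural transformations with categories with families. Your unwinding of the sorts (types as cospans in $\D_0(X)$, terms as elements of $\D_1(X)$, the comparison with $\D(X)$ via display maps, and the reason for ``equivalent'' rather than ``isomorphic'') is exactly the bookkeeping the paper leaves implicit, and your invocation of the strictification remark to explain the equivalence is appropriate.
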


\begin{remark}\label{rem:credit}
A result essentially the same as our corollary \ref{cor:CwF} was announced in 2012 by Lumsdaine and Warren, and has finally appeared in \cite{LW}.  Reasoning very similar to that used here is also used in that work, which should therefore be regarded as prior.  The main contribution of the present work is the concept of a natural model of type theory as an alternative presentation of the notion of a category with families, and the adaptation of the results and methods of \cite{LW} to this setting.
\end{remark}

\noindent Examples of categories satisfying the conditions of  theorem \ref{thm:natmod} include:
\begin{enumerate}
\item the category of Kan simplicial sets, with the (right) weak factorization system of the associated Quillen model structure.
\item similarly, the category of fibrant objects in any locally cartesian closed model category that is right proper, and in which the cofibrations are the monos; e.g.\ any right proper, Cisinski model category.
\item more generally, the category of ``fibrant'' objects in any weak factorization system on a (pre)sheaf topos in which the left maps are preserved by pullback along the right maps (the ``Frobenius condition" of van den Berg and Garner \cite{GvdB}).
\item non-LCC examples of categories with a weak factorization system for which the right maps are exponentiable, such as groupoids and categories with iso-fibrations.
\item any $\pi$h-tribe, in the sense of Joyal's categorical axiomatics for homotopy type theory \cite{J}.
\item the syntactic category of contexts $\mathcal{C}(\mathbb{T})$ of a system of type theory $\mathbb{T}$ with $\Sigma, \Pi$ and $\Id$ types (see \cite{GG}).
\end{enumerate}

\begin{remark}
Regarding terminology:
Let  $\D$ be any closed,  stable, factorizing class of maps in a category \C.  We may call the maps in \D\  \emph{typical} (since they are the \emph{types}), and say that \D\ is a \emph{typical structure} on \C, and that \C\ (together with \D) is a \emph{typical category}.  Our main theorem then says that any typical category supports a natural model of basic homotopy type theory. 

Assuming a  class of maps \D\  that is stable and closed, it is enough to require anodyne-\D\ factorizations just for the diagonal maps $A\to A\times A$,  in order to obtain them for all maps.  The notion of a typical category is then closely akin to \emph{first-order logic}: a category of contexts and substitutions, equipped with a system of ``predicates" closed under $\Sigma$, $\Pi$, and $\Id$.  A notion of category suitable to model full homotopy type theory, with a (univalent) universe and higher inductive types, will then be a typical category with some additional structure.
\end{remark}

\subsubsection*{Acknowledgements}

The  results developed here are an amalgamation of original ideas and ones derived from \cite{KLV} and \cite{LW}.  The author has benefitted from conversations with Thierry Coquand, Nicola Gambino, Richard Garner, Andr\'e Joyal, Peter Lumsdaine, Andy Pitts, Michael Shulman, Thomas Streicher, Michael Warren, and Vladimir Voevodsky.  Thanks are also due to two anonymous referees, who contributed many insightful suggestions for improvement, and to Marco Larrea for a good late catch.  The author thanks the Institute for Advanced Study, where this research was mainly conducted and first presented, and the Institut Henri Poincar\'e, where it was concluded.  Support was provided by the Air Force Office of Scientific Research through MURI grant FA9550-15-1-0053, and by the National Science Foundation. Any opinions, findings, and conclusions or recommendations expressed in this material are those of the author and do not necessarily reflect the views of the AFOSR or the NSF.  


\end{document}